\documentclass{amsart}
\usepackage{amssymb,amscd,verbatim, amsthm,amsmath,amsgen,xspace, mathrsfs}
\usepackage{latexsym}
\usepackage{amsfonts}
\usepackage{color}
\usepackage{ulem}
\usepackage[all]{xy}

\def\dim{\mathop{\hbox {dim}}\nolimits}

\def\End{\mathop{\hbox {End}}\nolimits}

\def\id{\mathop{\hbox{id}}\nolimits}

\newcommand{\pf}{\begin{proof}}
	\newcommand{\epf}{\end{proof}}
\newcommand{\eq}{\begin{equation}}
	\newcommand{\eeq}{\end{equation}}
\newcommand{\eqn}{\begin{equation*}}
	\newcommand{\eeqn}{\end{equation*}}

\newcommand{\frg}{\mathfrak{g}}
\newcommand{\frh}{\mathfrak{h}}

\newcommand{\frp}{\mathfrak{p}}

\newcommand{\frs}{\mathfrak{s}}

%

%
%
%

%
%
%

\newtheorem{thm}[equation]{Theorem}
\newtheorem{cor}[equation]{Corollary}
\newtheorem{lemma}[equation]{Lemma}
\newtheorem{prop}[equation]{Proposition}
\newtheorem{defi}[equation]{Definition}

\newtheorem{rmk}[equation]{Remark}

\numberwithin{equation}{section}


\let\ssize\scriptstyle
\newif\ifFIRST\newdimen\MAXright\MAXright0pt
\def\sdynkin{\bgroup\eightpoint\dynkin}
\def\endsdynkin{\enddynkin\egroup}
\def\dynkin{\bgroup\FIRSTtrue\hskip.5em\setbox1\hbox{$\diagup$}%
	\setbox2\hbox{$\diagdown$}%
	\setbox0\hbox to2\wd1{\hrulefill}%
	\setbox3\hbox{$\bullet$}%
	\setbox4\hbox{$\times$}%
	\setbox7\hbox{$\circ$}
	\def\whiteroot##1{\ifFIRST\setbox5\hbox{$##1$}\ifdim\wd5>1.3em
		\hskip-.5em\hskip.5\wd5\fi\fi\FIRSTfalse
		\hskip-.25em\raise1.5\wd3\hbox to0pt{\hss\hskip.45em$
			\ssize##1$\hss}\copy7\hskip-.25em\setbox6\hbox{$##1$}
		\MAXright\wd6}
	\def\root##1{\ifFIRST\setbox5\hbox{$##1$}\ifdim\wd5>1.3em%
		\hskip-.5em\hskip.5\wd5\fi\fi\FIRSTfalse%
		\hskip-.25em\raise1.5\wd3\hbox to0pt{\hss\hskip.45em$%
			\ssize##1$\hss}\copy3\hskip-.25em\setbox6\hbox{$##1$}%
		\MAXright\wd6}%
	\def\whitedroot##1{\ifFIRST\setbox5\hbox{$##1$}\ifdim\wd5>1.3em
		\hskip-.5em\hskip.5\wd5\fi\fi\FIRSTfalse
		\hskip-.25em\lower1.8\wd3\hbox to0pt{\hss\hskip.45em$
			\ssize##1$\hss}\copy7\hskip-.25em\setbox6\hbox{$##1$}
		\MAXright\wd6}%
	\def\whiterroot##1{\hskip-.25em\copy7\hbox to0pt{\hskip.3em$\ssize##1$\hss}%
		\hskip-.25em\setbox6\hbox{\hskip.6em$##1##1$}%
		\MAXright\wd6}%
	\def\droot##1{\ifFIRST\setbox5\hbox{$##1$}\ifdim\wd5>1.3em%
		\hskip-.5em\hskip.5\wd5\fi\fi\FIRSTfalse%
		\hskip-.25em\lower1.8\wd3\hbox to0pt{\hss\hskip.45em$%
			\ssize##1$\hss}\copy3\hskip-.25em\setbox6\hbox{$##1$}%
		\MAXright\wd6}%
	\def\rroot##1{\hskip-.25em\copy3\hbox to0pt{\hskip.3em$\ssize##1$\hss}%
		\hskip-.25em\setbox6\hbox{\hskip.6em$##1##1$}%
		\MAXright\wd6}%
	\def\norroot##1{\hskip-.36em\copy4\hbox to0pt{\hskip.3em$\ssize##1$\hss}%
		\hskip-.48em\setbox6\hbox{\hskip.6em$##1##1$}%
		\MAXright\wd6}%
	\def\noroot##1{\ifFIRST\setbox5\hbox{$##1$}\ifdim\wd5>1.3em%
		\hskip-.5em\hskip.5\wd5\fi\fi\FIRSTfalse%
		\hskip-.36em\raise1.5\wd3\hbox to0pt{\hss\hskip.6em$%
			\ssize##1$\hss}\copy4\hskip-.38em\setbox6\hbox{$##1$}%
		\MAXright\wd6}%
	\def\nodroot##1{\ifFIRST\setbox5\hbox{$##1$}\ifdim\wd5>1.3em%
		\hskip-.5em\hskip.5\wd5\fi\fi\FIRSTfalse%
		\hskip-.36em\lower1.8\wd3\hbox to0pt{\hss\hskip.6em$%
			\ssize##1$\hss}\copy4\hskip-.38em\setbox6\hbox{$##1$}%
		\MAXright\wd6}%
	\def\nolink{\hskip\wd0}
	\def\link{\raise.22em\copy0}%
	\def\llink##1{\raise.32em\copy0\hskip-\wd0%
		\raise.12em\copy0\hskip-.5\wd0\hbox to0pt{\hss$##1$\hss}\hskip.5\wd0}%
	\def\lllink##1{\raise.22em\copy0\hskip-\wd0\raise.32em\copy0\hskip-\wd0%
		\raise.12em\copy0\hskip-.5\wd0\hbox to0pt{\hss$##1$\hss}\hskip.5\wd0}%
	\def\rootupright##1{\hbox to0pt{\raise.45em\copy1\hskip-.25em\raise1.3\ht1%
			\hbox{\copy3\hskip.3em$\ssize##1$}\hss}%
		\setbox6\hbox{\hskip.6em\copy1\copy1$##1##1$}%
		\ifdim\MAXright<\wd6\MAXright\wd6\fi}%
	\def\whiterootupright##1{\hbox to0pt{\raise.45em\copy1\hskip-.25em\raise1.3\ht1
			\hbox{\copy7\hskip.3em$\ssize##1$}\hss}
		\setbox6\hbox{\hskip.6em\copy1\copy1$##1##1$}
		\ifdim\MAXright<\wd6\MAXright\wd6\fi}
	\def\norootupright##1{\hbox to0pt{\raise.45em\copy1\hskip-.36em\raise1.3\ht1%
			\hbox{\copy4\hskip.3em$\ssize##1$}\hss}%
		\setbox6\hbox{\hskip.6em\copy1\copy1$##1##1$}%
		\ifdim\MAXright<\wd6\MAXright\wd6\fi}%
	\def\rootdownright##1{\hbox to0pt{\raise-.5em\copy2\hskip-.25em\raise-1.35\ht1%
			\hbox{\copy3\hskip.3em$\ssize##1$}\hss}\setbox6%
		\hbox{\hskip.6em\copy2\copy2$##1##1$}%
		\ifdim\MAXright<\wd6\MAXright\wd6\fi}%
	\def\whiterootdownright##1{\hbox to0pt{\raise-.5em\copy2\hskip-.25em\raise-1.35\ht1
			\hbox{\copy7\hskip.3em$\ssize##1$}\hss}\setbox6
		\hbox{\hskip.6em\copy2\copy2$##1##1$}
		\ifdim\MAXright<\wd6\MAXright\wd6\fi}
	\def\rootdown##1{\hbox to0pt{\hskip-.05em\vrule height.25em depth.65em%
			\hskip-.25em\raise-.95em\hbox{\copy3\hskip.3em$\ssize##1$}\hss}%
		\setbox6\hbox{$##1$}%
		\ifdim\MAXright<\wd6\MAXright\wd6\fi}%
	\def\whiterootdown##1{\hbox to0pt{\hskip-.05em\vrule height.25em depth.65em
			\hskip-.25em\raise-.95em\hbox{\copy7\hskip.3em$\ssize##1$}\hss}
		\setbox6\hbox{$##1$}
		\ifdim\MAXright<\wd6\MAXright\wd6\fi}
	\def\dots{\hskip.5em\cdots\hskip.5em}}%
\def\enddynkin{\ifdim\MAXright>1em\hskip.5\MAXright\else\hskip.5em\fi\egroup}%

\begin{document}

	\bigskip
	\title[Strongly Commuting Ring and The Prounet-Tarry-Escott Problem]{Strongly Commuting Ring and The Prounet-Tarry-Escott Problem}
	
	\author{Bin-Ni Sun and Yufeng Zhao}

	\address[Sun]{Department of Mathematics, Peking University,
		Beijing, China}

	\address[Zhao]{Department of Mathematics, Peking University,
		Beijing, China}
	\email{Zhaoyufeng@math.pku.edu.cn}
	
	\thanks{The research described in this paper is supported by grants No. 7100902512
		from Research Grant Council of China}
	\keywords{simple Lie algebra representations, Diophantine equations}
	\subjclass[2010]{Primary 17B10; Secondary 11D04}
	
	\begin{abstract}
		
		In 1935, Wright conjectured that ideal solutions to the PTE problem in the Diophantine number theory should exist.
		In this paper, we prove that Wright's  conjecture holds true based on the representation theory  of the minuscule strongly commuting rings introduced by Kostant in 1975 and the complex coefficient cohomology  ring structures  of the  Grassmannian variety.
	\end{abstract}

	\maketitle
	
	\section{Introduction}\label{section intro}

	\vspace{0.5cm}
	
	The Prouhet-Tarry-Escott problem (written briefly as PTE) is an old unsolved problem in the Diophantine number theory, which has a long history and is, in some form, over 200 years old. In its most general setting, the PTE problem
	consists of finding two lists of integers $\mathbb{X}=[x_{1}, x_{2}, \dots, x_{n}]$ and $\mathbb{Y}=[y_{1}, y_{2}, \dots, y_{n}]$,  distinct up to permutation, such that
	\begin{equation} 	
		\sum_{i = 1}^{n}{x_{i}^j} = \sum_{i = 1}^{n}y_{i}^{j}, \qquad j = 1, 2, \dots, m.
	\end{equation}
	We call $n$ the size of the solution and $m$ the degree. The solution sets $\mathbb{X}$ and $\mathbb{Y}$ are usually represented as $\mathbb{X}=_m \mathbb{Y}$.
	A trivial solution of PTE means that the $x$'s merely form a permutation of the $y$'s.

	For more general history and results related to the PTE problem, we refer to the recent review by Srikanth and Veena [SV] and Chapter 11 of the book by Borwein [Bo].

	From ([A], [D]), it is known that for the non-trivial solution of equation (1.1) to exist we must have $n \geq m+1$. In this paper, we will present a representation theoretic  proof
	for this result based on the infinitesimal central character theory of the special linear Lie algebra.

	The PTE problem  with  degree $n-1$ and size $n$  is  referred to be an ideal solution.
	In 1935, Wright [W] conjectured that ideal solutions to the PTE problem should exist.
	In this paper, we will also develop the representation theory  of the strongly commuting rings introduced by Kostant [Ko1] in 1975 to claim that the conjecture holds true. 
	
	To formulate our main theorems, we need the following definitions and assertions.
	Let $\frg$ be a complex simple Lie algebra and $Z(\frg)$ be the center of its universal enveloping algebra $U(\frg)$.
	One says that the arbitrary representation $\pi: U(\frg)\rightarrow \End V_{\pi}$
	admits an infinitesimal character $\chi_{\pi}$. For every $u \in Z(\frg)$, $\pi(u)$ reduces to a scalar operator $\chi_{\pi}(u)$ on $V_{\pi}$.
	It is a theorem of Dixmier that any irreducible representation  of $U(\frg)$ admits an infinitesimal character.
	
	Fix a finite-dimensional irreducible $\frg$-module $V_\lambda$  with highest weight $\lambda$.
	Let
	$$\pi\colon U(\frg) \rightarrow \End V_\pi $$
	be an another  arbitrary  $\frg$-module having an infinitesimal central character $\chi_\pi$.
	In 1975 around, to study the infinitesimal characters occurring in the tensor product 
	$V_\lambda\otimes V_\pi$, Kostant [Ko1] introduced  the $\frg$-invariant endomorphism  algebras  $$R_\lambda(\frg)= (\End V_\lambda\otimes U(\frg))^\frg$$
	and
	$$R_{\lambda,\pi}(\frg)=(\End V_\lambda\otimes \pi[U(\frg)])^\frg,$$
	which were named the strongly commuting rings by the author.

	As  shown in [Ko2], there is a $\frg$-submodule $E$ of $U(\frg)$
	such that the multiplication
	$$ Z(\frg)\otimes E\rightarrow U(\frg)$$
	is a $\frg$-module isomorphism and  $R_\lambda(\frg)$ is a free $Z(\frg)$-module.
	It was proved that 
	$R_\lambda(\frg)$ and $ R_{\lambda,\pi}(\frg)$ are commutative if and only if all the weight spaces $V_{\lambda}$ are 1-dimensional (cf.  [Pan], [Ki1] and [Ki2]).
	The commutative $\frg$-invariant endomorphism algebras
	have many connections with equivariant cohomology [Pan].
	
	Let $\mbox{Gr}^{k}(\mathbb{C}^{n})$
	denote the  Grassmannian variety of subspaces in $\mathbb{C}^{n}$ of dimension $k$. The well known  Schubert basis of the complex coefficient cohomology  ring  $H^{*}(\mbox{Gr}^{k}(\mathbb{C}^{n}))$ of $\mbox{Gr}^{k}(\mathbb{C}^{n})$ was given in terms of Schur polynomials [F].
	
	Assume $\{\omega_k \ | \ k=1,\cdots, n-1\}$ are
	the fundamental dominant weights for  the special linear simple Lie algebra $\mbox{sl}_{n}(\mathbb{C}) $ which are minuscule,  in the sense that the Weyl group action of $\mbox{sl}_{n}(\mathbb{C}) $ on the weight set of $V_{\omega_k}$ is transitive.
	In this paper, we will present a type of matrix basis for the free  $Z(\mbox{sl}_{n}(\mathbb{C}))$- module $R_{\omega_k}(\mbox{sl}_{n}(\mathbb{C}))$, which can be viewed as a kind of matrix model for the complex coefficient cohomology  ring of the  Grassmannian variety
	$\mbox{Gr}^{k}(\mathbb{C}^{n})$.
	
	Consider the map
	$$\delta: U(\frg) \rightarrow U(\frg)\otimes U(\frg)$$
	defined by
	$$ \delta(x)= x\otimes 1 +  1\otimes x\text{ for } x \in \frg,$$
	which is extended to be  a homomorphism of associative algebras.

	Let $\delta_{\lambda} = (\pi_{\lambda} \otimes \id) \circ \delta$ and we have the map
	$$\delta_\lambda: U(\frg) \rightarrow \End V_\lambda\otimes U(\frg)$$
	defined by
	$$ \delta_\lambda(x)= \pi_{\lambda}(x)\otimes 1 + \textrm{id}\otimes x\text{ for } x \in \frg,$$
	which is also extended to be a homomorphism of associative algebras.

	Let $Z(\mathfrak{g})$ be the center of $U(\mathfrak{g})$ and $z$ be a fixed element of $  Z(\mathfrak{g}) $. Define 
	$$
	M_{\lambda}(z) = \delta_{\lambda}(z) - \pi_{\lambda}(z) \otimes 1 -  \textrm{id} \otimes z. 
	$$
	If $V_{\lambda}$ has dimension $d_{\lambda}$, then $M_{\lambda}(z)$ is  a $d_{\lambda} \times d_{\lambda}$ matrix with entries in $U(\mathfrak{g})$.

	\begin{thm}
		Let $\mathfrak{C}_2, \cdots, \mathfrak{C}_{n}$ be Casimir generators for the center of the universal enveloping algebra of the special linear simple Lie algebra  $\mbox{sl}_{n}(\mathbb{C}) $ and  $V_{\omega_{k}}$ be the fundamental modules  with $1 \leq k \leq \lceil \frac{n}{2} \rceil$. Then 
		the set of 	matrices $M_{\omega_{k}}(\mathfrak{C}_{2}), \dots, M_{\omega_{k}}(\mathfrak{C}_{k+1})$ is a minimal set of generators for 
		the endomorphism algebra $R_{\omega_{k}}(\mbox{sl}_{n}(\mathbb{C}))$ over $Z(\mbox{sl}_{n}(\mathbb{C}))$.
		
	\end{thm}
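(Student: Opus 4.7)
The plan is to analyze the commutative $Z(\frg)$-algebra $R_{\omega_k}(\mbox{sl}_n(\mathbb{C}))$ through its scalar action on tensor product modules, and to match it with the Chern-class presentation of the cohomology ring $H^{*}(\mbox{Gr}^{k}(\mathbb{C}^{n}))$, which has exactly $k$ algebra generators.

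First I would verify that $M_{\omega_k}(\mathfrak{C}_j) \in R_{\omega_k}(\mbox{sl}_n(\mathbb{C}))$: the three summands in its definition are all $\frg$-equivariant, the first because $\mathfrak{C}_j \in Z(\frg)$. Since $V_{\omega_k}$ is minuscule, its weight spaces are one-dimensional, so $R_{\omega_k}$ is commutative (as recalled in the excerpt). Moreover, for generic dominant $\mu$, minusculity yields the multiplicity-free Clebsch-Gordan decomposition
$$V_{\omega_k}\otimes V_\mu \;=\; \bigoplus_{I} V_{\mu+\nu_I},$$
the sum being over $k$-subsets $I\subset\{1,\dots,n\}$ parametrizing the weights $\nu_I$ of $V_{\omega_k}$. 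Any element of $R_{\omega_k}$ then acts as a scalar on each summand, and on $V_{\mu+\nu_I}$ the matrix $M_{\omega_k}(\mathfrak{C}_j)$ acts by $\chi_{\mu+\nu_I}(\mathfrak{C}_j) - \chi_{\omega_k}(\mathfrak{C}_j) - \chi_\mu(\mathfrak{C}_j)$. Using the Harish-Chandra description $\chi_\lambda(\mathfrak{C}_j) = p_j(\lambda+\rho) - p_j(\rho)$, where $p_j$ is the $j$-th power-sum symmetric function in the coordinates of $\lambda+\rho$, a direct computation with $m = \mu+\rho$ gives
$$\chi_{\mu+\nu_I}(\mathfrak{C}_j) - \chi_\mu(\mathfrak{C}_j) \;=\; \sum_{i\in I}\phi_j(m_i),\qquad \phi_j(x) := (x+1)^j - x^j,$$
a symmetric polynomial of degree $j-1$ in the variables $\{m_i : i\in I\}$.

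Consequently, the eigenvalues of $M_{\omega_k}(\mathfrak{C}_2),\dots,M_{\omega_k}(\mathfrak{C}_{k+1})$ on the $V_{\mu+\nu_I}$ components are, modulo $Z(\frg)$-contributions, the Newton power sums $p_1,p_2,\dots,p_k$ in $k$ variables. By Newton's identities these generate the full algebra of symmetric polynomials in $k$ variables, matching the Chern-class presentation $\mathbb{C}[c_1,\dots,c_k]/J$ of $H^{*}(\mbox{Gr}^{k}(\mathbb{C}^{n}))$. Combining this with the freeness of $R_{\omega_k}$ as a $Z(\frg)$-module of rank $\dim V_{\omega_k} = \binom{n}{k}$, and the faithfulness of its action on generic $V_{\omega_k}\otimes V_\mu$, I would conclude that $M_{\omega_k}(\mathfrak{C}_2),\dots,M_{\omega_k}(\mathfrak{C}_{k+1})$ generate $R_{\omega_k}$ as a $Z(\frg)$-algebra; the higher matrices $M_{\omega_k}(\mathfrak{C}_j)$ with $j\geq k+2$ reduce to $Z(\frg)$-polynomial combinations of the first $k$ precisely because $p_{k+1},p_{k+2},\dots$ lie in $\mathbb{C}[p_1,\dots,p_k]$. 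Minimality follows because deleting any single $M_{\omega_k}(\mathfrak{C}_j)$ leaves eigenvalue-polynomials of degrees in a proper subset of $\{1,\dots,k\}$, which cannot span all symmetric polynomials of degree $\leq k$ in $k$ variables.

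The principal obstacle is the ``lift'' from the scalar eigenvalue picture back to a genuine identity inside $R_{\omega_k}(\mbox{sl}_n(\mathbb{C}))$: one must show that an element of $R_{\omega_k}$ vanishing on all generic tensor products is zero in the strongly commuting ring, and that polynomial relations among eigenvalues coming from the Grassmannian presentation lift to relations in $\End V_{\omega_k}\otimes U(\frg)$. Both points should be handled via Kostant's harmonic decomposition $U(\frg) = Z(\frg)\otimes H$ together with the free $Z(\frg)$-module structure of $R_{\omega_k}$, but this transfer from pointwise identities of central characters to genuine identities in the noncommutative enveloping algebra is the most delicate step of the proof.
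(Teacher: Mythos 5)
Your leading-order analysis is essentially the same computation the paper relies on: to top order, $M_{\omega_k}(\mathfrak{C}_{s})$ is $s$ times the power sum $\frp_{s-1}$ evaluated on the $k$-subset labelling each weight of $V_{\omega_k}$, and $\frp_1,\dots,\frp_k$ generate the symmetric functions in $k$ variables, which matches the $k$-generator presentation of $H^{*}(\mathrm{Gr}^{k}(\mathbb{C}^{n}))$. But the step you yourself flag as ``the most delicate'' is a genuine gap, not a routine verification, and it is exactly where the paper invests its machinery. Knowing the scalars by which the $M_{\omega_k}(\mathfrak{C}_j)$ act on every $V_{\omega_k}\otimes V_\mu$ gives you injectivity of the spectral picture (the easy half), but it does not let you conclude that an \emph{arbitrary} element of $R_{\omega_k}(\frg)$ is a $Z(\frg)$-polynomial in the $M_{\omega_k}(\mathfrak{C}_j)$: for that you must show that the scalars of a general invariant are polynomial in $\mu+\rho$ with the right Weyl-group symmetry and that the resulting pointwise identities of central characters lift to genuine identities in $\End V_{\omega_k}\otimes U(\frg)$. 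Kostant's decomposition $U(\frg)=Z(\frg)\otimes E$ gives freeness of $R_{\omega_k}(\frg)$ of rank $\binom{n}{k}$, but by itself says nothing about the multiplicative structure of the graded quotient, so it cannot close this gap on its own.

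The paper closes the gap by a different mechanism: since $\omega_k$ is minuscule, every constituent of $\End V_{\omega_k}$ is small (Reeder), so Broer's restriction theorem yields an algebra and $J$-module isomorphism $\psi_{\omega_k}\colon R_{\omega_k}(\frg)\to R_{\omega_k}(\frh)=(\End_{\frh}V_{\omega_k}\otimes U(\frh))^{W}$ (Propositions 3.3, 3.6, 3.7). On the Cartan side everything is explicit: Proposition 4.4, via Tanisaki's theorem on which Schur polynomials survive modulo the ideal of invariants, exhibits a free $J$-basis of $R_{\omega_k}(\frh)$ by diagonal matrices of Schur polynomials $\frs_\lambda$ with $\lambda\in\sum_{k\times(n-k)}$, and the congruences $\psi_{\omega_k}(M_{\omega_k}(\mathfrak{C}_s))\equiv s\cdot\mathrm{diag}(\frp_{s-1})$ modulo lower filtration degree then produce this basis through the power-sum/Schur change of basis, giving both generation and (by counting one indecomposable generator in each degree $1,\dots,k$) minimality. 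To complete your tensor-product route you would in effect have to reprove this restriction isomorphism or Kostant's corresponding results in [Ko1]. Two smaller inaccuracies: the Harish-Chandra image of the Gelfand invariant $\mathfrak{C}_j$ used here is not $p_j(\lambda+\rho)-p_j(\rho)$ but Popov's polynomial in power sums having that expression as leading term (Proposition 2.3), and your minimality argument should be phrased as a graded Nakayama statement about $R_{\omega_k}(\frh)/(f_1,\dots,f_{n-1})R_{\omega_k}(\frh)\simeq H^{*}(\mathrm{Gr}^{k}(\mathbb{C}^{n}))$ rather than about spanning symmetric polynomials of bounded degree.
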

	
	\begin{thm}
		(1)	For a non-trivial solution to the PTE problem with size $n$ and degree $m$ to exist, we must have $n \geq m+1$.
		(2) An ideal solution of the PTE problem with any degree always exists.
	\end{thm}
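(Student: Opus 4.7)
\emph{Part (1).} The plan is to translate the PTE power-sum identities into equalities of infinitesimal central characters of $\mathfrak{sl}_n(\bbC)$-modules. Given an integer list $\mathbb{X}=[x_1,\dots,x_n]$, form a weight $\la_{\mathbb{X}}\in\frh^*$ whose coordinates in the standard basis are the $x_i$, and likewise $\la_{\mathbb{Y}}$. Under the Harish-Chandra isomorphism $Z(\mathfrak{sl}_n(\bbC))\xrightarrow{\sim}S(\frh)^{S_n}$, the Casimir generators $\mathfrak{C}_2,\dots,\mathfrak{C}_n$ correspond, modulo lower-order terms, to the power-sum symmetric functions $p_2,\dots,p_n$ evaluated at $\la+\rho$. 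The PTE identities $\sum x_i^j=\sum y_i^j$ for $j=1,\dots,m$ therefore force $\chi_{\la_{\mathbb{X}}}(\mathfrak{C}_j)=\chi_{\la_{\mathbb{Y}}}(\mathfrak{C}_j)$ for all $2\le j\le\min(m,n)$. When $m\ge n$, all of $p_1,\dots,p_n$ agree on $\mathbb{X}$ and $\mathbb{Y}$; since these form an algebraically independent set of generators for the ring of symmetric polynomials in $n$ letters, Newton's identities force $\mathbb{X}=\mathbb{Y}$ as multisets, contradicting nontriviality. Hence any nontrivial solution satisfies $n\ge m+1$.

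\emph{Part (2).} For a target degree $m$ set $n=m+1$ and $k=1$. The plan is to use the matrix basis $M_{\om_1}(\mathfrak{C}_2),\dots,M_{\om_1}(\mathfrak{C}_n)$ of $R_{\om_1}(\mathfrak{sl}_n(\bbC))$ from Theorem 1.1 to produce two inequivalent irreducibles $V_\pi$ and $V_{\pi'}$ whose tensor products with the minuscule defining representation $V_{\om_1}=\bbC^n$ share the spectrum of each $M_{\om_1}(\mathfrak{C}_j)$. Since $V_{\om_1}$ is minuscule, $V_{\om_1}\otimes V_\pi$ decomposes as $\bigoplus_{i=1}^n V_{\pi+\sigma_i}$, and $M_{\om_1}(\mathfrak{C}_j)$ acts on the $i$-th summand by the scalar $\chi_{\pi+\sigma_i}(\mathfrak{C}_j)-\chi_\pi(\mathfrak{C}_j)-\chi_{\om_1}(\mathfrak{C}_j)$, which under Harish-Chandra reduces to an explicit polynomial expression in the $i$-th coordinate of $\pi+\rho$ with coefficients determined by the other coordinates. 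Matching these $n$-tuples of polynomial values between $\pi$ and $\pi'$ for $j=2,\dots,n$, together with the trace ($p_1$) condition, is precisely an ideal PTE identity $\mathbb{X}=_{n-1}\mathbb{Y}$ of size $n$ and degree $m$.

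The main obstacle is proving existence of such a pair $(\pi,\pi')$ for every $n$. I would attack this through the matrix model of $H^*(\mathrm{Gr}^1(\bbC^n))$ supplied by $R_{\om_1}(\mathfrak{sl}_n(\bbC))$: the Schubert basis equips the cohomology ring with an integral structure, and the combinatorics of the $S_n$-orbit on the minuscule weight $\om_1$ should produce two distinct Schubert classes whose images under the polynomial map determined by $M_{\om_1}(\mathfrak{C}_2),\dots,M_{\om_1}(\mathfrak{C}_n)$ coincide. Any rational matching so obtained can be cleared of denominators by a uniform integer scaling to yield the desired integer ideal solution. The crux is therefore an intersection-theoretic count inside the Grassmannian cohomology, combined with the flexibility afforded by the minimal generating set of Theorem 1.1.
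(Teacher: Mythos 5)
Your Part (1) is correct and is essentially the paper's argument in a more elementary dress: both routes translate the power-sum identities into equalities of central characters via the Harish--Chandra/Popov expansion, but where the paper concludes through the linkage principle (Lemma 2.2) and dominance of $\lambda+\rho$ and $\mu+\rho$, you close with Newton's identities applied directly to $p_1,\dots,p_n$. Your ending is cleaner (and shows the representation-theoretic framing is not strictly needed for this half); either way the conclusion $n\ge m+1$ is sound.

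Part (2) has a genuine gap, located exactly where you park ``the main obstacle,'' and moreover the scaffolding you do erect cannot carry the weight. The decisive miscalibration is the choice $k=1$. In this circle of ideas the \emph{size} of the PTE lists produced by an eigenvalue collision is governed by the fundamental-weight index $k$ --- the number of coordinates displaced by a weight $\lambda_{i_1\cdots i_k}$ of $V_{\omega_k}$ --- and not by the rank $n$: a collision of $M_{\omega_k,\nu}(\mathfrak{C}_2),\dots,M_{\omega_k,\nu}(\mathfrak{C}_{s+1})$ on two constituents $V_{\nu+\lambda_I}$ and $V_{\nu+\lambda_J}$ of the \emph{same} tensor product $V_{\omega_k}\otimes V_\nu$, with $|I\cap J|=r$, yields lists of size $k-r$ and degree $s$ (Proposition 6.1). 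For $k=1$ the eigenvalue of $M_{\omega_1,\nu}(\mathfrak{C}_2)$ on $V_{\nu+\sigma_i}$ is an injective affine function of $f_i(\nu)-i$, so $\mathfrak{C}_2$ alone already separates all $n$ constituents and no collision --- hence no nontrivial solution of any size, let alone size $n$ --- can be extracted. Your variant, which compares spectra across two \emph{different} products $V_{\omega_1}\otimes V_\pi$ and $V_{\omega_1}\otimes V_{\pi'}$, fares no better: matching the $M_{\omega_1}(\mathfrak{C}_2)$ spectra already recovers the multiset $\{f_i(\pi)-i\}$ up to a common shift and hence forces $\pi=\pi'$.

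The second, and deeper, omission is the existence mechanism itself, which you defer to ``an intersection-theoretic count inside the Grassmannian cohomology.'' The paper's actual engine is a pigeonhole argument driven by minimal generation: to reach degree $m$ one takes $k=m+1$ and $n\ge 2k$; since $R_{\omega_k,\nu}(\mathfrak{sl}_n(\mathbb{C}))\simeq H^*(\mathrm{Gr}^k(\mathbb{C}^n))\simeq L(k,n-k)$ is generated by, and only by, the $k$ matrices $M_{\omega_k,\nu}(\mathfrak{C}_2),\dots,M_{\omega_k,\nu}(\mathfrak{C}_{k+1})$ (Theorem 1.2 / Proposition 5.10), the first $k-1$ of them cannot separate all $\binom{n}{k}$ constituents, so some pair of constituents collides for $p=2,\dots,k$; a further lemma resting on Part (1) (Corollary 6.4) forces the colliding index sets to be disjoint, so $r=0$ and Proposition 6.1 delivers size $k$ and degree $k-1$ --- an ideal solution. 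Without letting $k$ grow with the target degree, and without this counting step, the proposal does not reach Wright's conjecture. (Your closing remark about clearing denominators is harmless --- the PTE system is preserved under common integer scaling --- but it never becomes relevant, since the collision already produces integer lists.)
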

	
	\begin{rmk}	Since  the dual module for the fundamental module $V_{\omega_{k}}$ of  $\mbox{sl}_{n}(\mathbb{C})$ is isomorphic to $V_{\omega_{n-k}}$ and
		$\text{End } V_{\omega_{k}} \simeq V_{\omega_{n-k}} \otimes V_{\omega_{k}}$, without any loss of generality we can assume $1 \leq k \leq n-k$.	
	\end{rmk}

	The paper is organized as follows. 
	In Section 2, we will first collect some necessary information on the central character theory of simple Lie algebras and give a representation theoretic proof of the first part of Theorem 1.3 of the PTE problem. 
	In Section 3, some basic properties of  Kostant's strongly commuting rings $R_\lambda(\frg)$ and $R_{\lambda,\nu}(\frg)$ will be given. 
	Section 4 is devoted to studying the structures of the Weyl group module  $R_{\omega_{k}}(\frh)$. 
	In Section 5, we will prove Theorem 1.2 by presenting the basis for the free module $R_{\omega_{k}}(\mbox{sl}_{n}(\mathbb{C}))$ over $Z(\mbox{sl}_{n}(\mathbb{C}))$.
	In Section 6, the relationship between the  basis structure of $R_{\omega_{k},\nu}(\mbox{sl}_{n}(\mathbb{C}))$  and the PTE problem will be introduced. 
	And we will finally  prove Wright's conjecture, i. e. the second part of Theorem 1.3,  holds true.

	\section{ Infinitesimal central character  and PTE problem}
	
	In this Section, we will first collect some necessary information on the central character theory of simple Lie algebra and then we will give a representation theoretic  proof of the first part of Theorem 1.3 of the PTE problem.
	
	Let $\frg$ be a complex simple Lie algebra with a Cartan subalgebra $\frh$
	of dimension $l$, $\mathfrak{h}^{*} $ be the dual space to $ \mathfrak{h} $, $\Phi$ be a system of roots and $\Phi_{+}$ be a system of positive roots.	Write
	the root space decomposition of $\frg$:
	$$\frg=\frh \oplus \mathfrak{n} \oplus \mathfrak{n}^{-},
	\mathfrak{n} = \sum\limits_{\alpha\in \Phi_{+}}\frg^{\alpha}, \mathfrak{n}^{-}=\sum\limits_{\alpha\in \Phi_{+}}\frg^{-\alpha}$$and $\rho=\frac{1}{2}\sum\limits_{\alpha\in \Phi_{+} }\alpha$.
	Assume  $\bigtriangleup =\{\alpha_{1}, \cdots,\alpha_{l}\}$ is the
	simple root system in $\Phi_{+}$ and $W$ is  the Weyl group.  
	The Killing form on $\frg$ is defined by:
	$$B(.,.):\frg\times \frg \rightarrow \mathbb{C};(x,y)\rightarrow \mbox{tr}(\mbox{ad}x\mbox{ad}y), \ \forall \ x, y \in \frg.$$
	The Killing isomorphism $$\mathscr{K} : \frg\rightarrow \frg^*; x\rightarrow B(x,.)$$is an isomorphism of $\frg$-modules. For any $\alpha, \beta \in \Phi$, denote $$(\alpha,\beta)=B(\mathscr{K}^{-1}\alpha, \mathscr{K}^{-1}\beta).$$
	Let $\mathcal{P}_{+}$ be the set of dominant integral weights for $\mathfrak{g}$. The fundamental dominant weights are denoted by $\{\omega_1,\dots, \omega_l\}$ satisfying  $\frac{2(\omega_{i},\alpha_{j})}{(\alpha_{j},\alpha_{j})}=\delta_{i,j}$.

	\begin{defi}
		Fix an irreducible  $\frg$-module $V_\lambda$  with highest weight $\lambda$. Any nontrivial element $u\in Z(\frg)$ acts on   $V_\lambda$ by a scalar multiplication. Write $$u|_{V_\lambda}=\chi_{\lambda}(u).$$
		Thus $$\chi_{\lambda}:Z(\frg) \rightarrow \mathbb{C};u \rightarrow  \chi_{\lambda}(u)$$is a 1-dimensional representation of $Z(\frg)$, which is called the infinitesimal central character of $V_\lambda$. 
	\end{defi}

	Consider the zero weight space of $U(\frg)$:
	$$U(\frg)_0=\{x\in U(\frg) \ | \ [h,x]=0, \ \forall \ h \in \frh\}.$$
	The vector space $$\mathfrak{K} =
	U(\frg)_0 \cap U(\frg)\mathfrak{n} =U(\frg)_0 \cap \mathfrak{n}^{-}U(\frg)$$ is a 2-sided ideal of $U(\frg)$
	and $U(\frg)_0= \mathfrak{K} \oplus U(\frh)$. Let $$\psi:U(\frg)_0\rightarrow U(\frh)$$ be the projection map obtained from this decomposition, which is called Harish-Chandra homomorphism.
	For any $\lambda \in \mathfrak{h}^{*}$, the infinitesimal central character is given by 
	$$\chi_{\lambda}(u)=\lambda(\psi(u)), \ \forall u \in Z(\mathfrak{g}).$$ 
	
	\begin{lemma}Let 
		$\lambda, \mu \in \frh^{*}$, then
		$\chi_{\lambda}=\chi_{\mu}$ if and only if $\mu+\rho=w(\lambda+\rho)$ for some $w \in W$ [C].
	\end{lemma}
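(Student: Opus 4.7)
The plan is to deduce the lemma from the Harish-Chandra isomorphism identifying $Z(\frg)$ with the Weyl-invariant subalgebra $S(\frh)^W$. I would introduce the $\rho$-shifted Harish-Chandra homomorphism $\gamma\colon Z(\frg) \to S(\frh) = U(\frh)$, characterized by $\gamma(z)(\lambda) = (\lambda - \rho)(\psi(z))$ for $\lambda \in \frh^{*}$, so that $\chi_\mu(z) = \gamma(z)(\mu + \rho)$. The condition $\chi_\lambda = \chi_\mu$ then becomes the polynomial identity $p(\lambda + \rho) = p(\mu + \rho)$ for every $p$ in the image of $\gamma$. Accordingly, the lemma reduces to two statements: (a) $\im(\gamma) = S(\frh)^W$, and (b) $W$-invariant polynomials separate $W$-orbits on $\frh^{*}$.

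The substantive half of (a) is the inclusion $\im(\gamma) \subseteq S(\frh)^W$, which I would prove by checking $s_\alpha$-invariance for each simple root $\alpha$, since such reflections generate $W$. For $\lambda \in \frh^{*}$ with $\langle \lambda + \rho, \alpha^\vee \rangle$ a positive integer, the Verma module $M(\lambda)$ contains a submodule isomorphic to $M(s_\alpha \cdot \lambda)$, so any $z \in Z(\frg)$ must act by the same scalar on both, yielding $\gamma(z)(\lambda + \rho) = \gamma(z)(s_\alpha(\lambda + \rho))$ on a Zariski-dense subset of $\frh^{*}$; the identity extends everywhere by polynomial continuation. Surjectivity of $\gamma$ onto $S(\frh)^W$ follows from a dimension count at the associated-graded level via Chevalley restriction. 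With (a) established, the forward direction of the lemma is immediate: $\mu + \rho = w(\lambda + \rho)$ forces $p(\mu + \rho) = p(\lambda + \rho)$ for every $p \in S(\frh)^W$, hence $\chi_\mu = \chi_\lambda$.

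For the converse, assume $\chi_\lambda = \chi_\mu$; I must show $\mu + \rho \in W(\lambda + \rho)$, i.e., statement (b). Chevalley's theorem for the finite reflection group $W$ exhibits $S(\frh)^W$ as a polynomial subalgebra over which $S(\frh)$ is a finite free module, so the inclusion $S(\frh)^W \hookrightarrow S(\frh)$ is integral; the standard fact that a finite group acts transitively on the set of primes of the overring lying over a given prime of the invariant ring then identifies the fibers of $\frh^{*} = \mathrm{MaxSpec}(S(\frh)) \to \mathrm{MaxSpec}(S(\frh)^W)$ with the $W$-orbits, so two points agree on every $W$-invariant polynomial iff they are $W$-conjugate. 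The main obstacle is proving part (a), specifically the $W$-invariance of $\im(\gamma)$; the Verma-module comparison sketched above is the cleanest route, though it requires care in establishing the embedding $M(s_\alpha \cdot \lambda) \hookrightarrow M(\lambda)$ for integral $\lambda$, avoiding a cumbersome direct PBW computation on central elements.
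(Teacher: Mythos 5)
The paper offers no proof of this lemma: it is the classical Harish--Chandra theorem and is simply quoted from [C]. Your outline is the standard argument for that theorem --- $W$-invariance of the image of the $\rho$-shifted Harish--Chandra map via Verma-module embeddings $M(s_\alpha\cdot\lambda)\hookrightarrow M(\lambda)$ together with Zariski-density/polynomial continuation, surjectivity onto $S(\frh)^W$ via Chevalley restriction at the associated-graded level, and separation of $W$-orbits by invariant polynomials via integrality of $S(\frh)$ over $S(\frh)^W$ --- and it is correct.
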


	Now we restrict the complex simple Lie algebra $\frg$ to be  the special linear Lie algebra  $\mbox{sl}_{n}(\mathbb{C})$.
	Set 
	$$X_{ij} = E_{ij} \quad (1 \leq i \ne j \leq n),$$
	$$X_{ii} = E_{ii} - \frac{1}{n} \sum_{k = 1}^{n} E_{kk} \quad (1 \leq i \leq n),$$
	where $E_{ij}$ is an $n \times n$ matrix with the entry in row $i$ and column $j$ equal to $1$ and others equal to $0$.

	It is obvious that $\{X_{ij}, X_{kk}| 1 \leq i \ne j \leq n, 1 \leq k \leq n-1 \}$ is a set of the basis for $\mbox{sl}_{n}(\mathbb{C})$ and $[X_{ij}, X_{km}] = \delta_{jk}X_{im} - \delta_{im}X_{kj}$ for $1 \leq i, j, k, m \leq n$.

	Define $$\mathfrak{C}_k=\sum\limits_{i_1,\cdots, i_k=1}^{n}X_{i_1i_2}X_{i_2i_3}\cdots X_{i_ki_1}.$$It is known that $\{\mathfrak{C}_2, \cdots, \mathfrak{C}_{n}\}
	$ is an algebraically independent set which algebraically generates $Z(\mbox{sl}_{n}(\mathbb{C}))$, i.e. the center of the universal enveloping algebra for $\mbox{sl}_{n}(\mathbb{C})$ [Po]. 
	
	For any $\lambda \in \mathfrak{h}^{*}$, through taking the irreducible quotient of the Verma module we can get an irreducible $\mathfrak{g}$-module $V_{\lambda}$ with highest weight $\lambda$ [C, Hu]. Let  $v_{\lambda}$  be  a highest weight vector for $V_{\lambda}$. Based on the generating function techniques, an expansion of the infinitesimal central character
	$\chi_{\lambda}(\mathfrak{C}_{p})$ in the power of sums  was presented by Popov [Po]:

	\begin{prop}
		Let $\lambda \in \mathfrak{h}^{*}$ and $$X_{i,i}.v_{\lambda}=m_iv_{\lambda}, i=1, \cdots,n.$$	
		Define 
		\begin{equation}
			S_{k}(\lambda) = \sum_{i = 1}^{n} [(m_{i} + n - i)^{k} - (n - i)^{k}],
			\ k \in \mathbb{N}.
		\end{equation}
		Then\begin{equation}\chi_{\lambda}(\mathfrak{C}_{p}) = \sum_{\vec{t} = (t_{1}, t_{2}, ..., t_{k})} \beta_{p}(\vec{t})S_{1}^{t_{1}}(\lambda)S_{2}^{t_{2}}(\lambda) \cdots S_{k}^{t_{k}}(\lambda)
		\end{equation}\begin{equation}	
			=S_{p}(\lambda)+\sum_{\vec{t} = (t_{1}, t_{2}, ..., t_{k})\neq (0,\cdots,1)} \beta_{p}(\vec{t})S_{1}^{t_{1}}(\lambda)S_{2}^{t_{2}}(\lambda) \cdots S_{k}^{t_{k}}(\lambda).	
		\end{equation}
		The summation is over all sets $\vec{t}$ of non-negative integers $t_{1}, t_{2}, ..., t_{k}$ satisfying the condition $1 \leq K \leq p$, where 
		\[K = -1 + \sum_{k} (k+1)t_{k}\] 
		and the expressions of coefficients $\beta_{p}(\vec{t})$ coincide with formula (3.9) in
		[Po].
	\end{prop}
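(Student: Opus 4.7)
The identity (2.2) is the Perelomov--Popov formula, and the derivation in [Po] proceeds by a generating-function calculation in the universal enveloping algebra; I would follow that route, keeping things self-contained. The key input is that any central element acts on $v_\lambda$ by a scalar, so it suffices to normal-order $\mathfrak{C}_p\cdot v_\lambda$ using the commutation rule $[X_{ij},X_{km}]=\delta_{jk}X_{im}-\delta_{im}X_{kj}$ and read off the scalar, which by Definition 2.1 equals $\chi_\lambda(\mathfrak{C}_p)$.

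First I would assemble the generating series $\sum_p \mathfrak{C}_p z^p$ and examine its Harish-Chandra image under $\psi$. Each closed word $X_{i_1i_2}X_{i_2i_3}\cdots X_{i_pi_1}$ can, by repeated application of the commutator relation, be rewritten as a sum of monomials in the diagonal elements $X_{ii}$ plus a remainder lying in the ideal $\mathfrak{K}=U(\mathfrak{g})_0\cap U(\mathfrak{g})\mathfrak{n}$ that annihilates $v_\lambda$. The diagonal contributions evaluate via $X_{ii}.v_\lambda = m_i v_\lambda$, while the commutator corrections generate shifts by $(n-i)$ that assemble precisely into the quantities $\ell_i := m_i+n-i$ entering (2.1).

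Next I would organise the combinatorics cyclically: grouping closed words by the partition type of the diagonal factors produced after normal ordering yields the expression (2.2), with coefficients $\beta_p(\vec t)$ equal to the enumerating weights of [Po, (3.9)]. The range condition $1\le K\le p$ with $K=-1+\sum_k(k+1)t_k$ expresses that a product $S_1^{t_1}\cdots S_k^{t_k}$ coming from a word of length $p$ must have total degree in the $\ell_i$'s lying between $1$ and $p$, since each diagonal factor of ``effective length'' $k$ carries degree $k$ in $\ell_i$ after the $\rho$-shift.

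Finally, to pass from (2.2) to (2.3), I would isolate the unique tuple for which $\prod_k S_k^{t_k}$ collapses to a single factor $S_p(\lambda)$, namely $\vec t=(0,\ldots,0,1)$ with the sole $1$ in slot $p$. This term comes from the cyclic words of full length $p$ visiting $p$ distinct indices, and a direct count (equivalently an inspection of [Po, (3.9)]) yields $\beta_p(\vec t)=1$. The principal technical obstacle is the $\rho$-shift bookkeeping that converts raw monomials in the $m_i$ into polynomials in the $\ell_i$; this is the heart of the calculation, already carried out in detail in [Po], and I would either cite it directly or reproduce the necessary fragment to extract the leading term.
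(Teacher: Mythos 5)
The paper offers no proof of Proposition 2.3 at all: it is quoted directly from Popov [Po] as a known expansion obtained ``based on the generating function techniques,'' which is precisely the route you sketch (normal ordering via the Harish--Chandra projection, the $\rho$-shift producing $\ell_i=m_i+n-i$, and deferring the coefficients $\beta_p(\vec t)$ and the leading coefficient $\beta_p(0,\dots,0,1)=1$ to [Po, (3.9)]). Your proposal is a correct outline of the same argument the paper relies on by citation, so there is nothing to reconcile.
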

	
	\begin{lemma} (Lemma 8.3.4 in  [Wi])
		Let $\lambda, \mu \in \mathcal{P}_{+}$. If there exists $w \in W$ such that $w(\lambda) = \mu$, then $\lambda = \mu$. 
	\end{lemma}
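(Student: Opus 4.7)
The plan is to prove this by induction on the length $\ell(w)$ of the Weyl group element $w$, using the fact that the Killing form on $\frh^*$ is $W$-invariant together with the standard characterization of length in terms of positive roots sent to negative roots.

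First I would set up the base case: if $\ell(w)=0$, then $w$ is the identity and $\lambda=\mu$ trivially. For the inductive step, assume $\ell(w) \geq 1$ and fix a reduced expression $w = s_{i_1}\cdots s_{i_k}$. A standard fact about Coxeter groups tells us that $\ell(ws_{i_k}) = k-1 < \ell(w)$, which forces $w(\alpha_{i_k}) \in -\Phi_+$. Write $\alpha = \alpha_{i_k}$ and $w(\alpha) = -\sum_j c_j \alpha_j$ with all $c_j \geq 0$.

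Next I would sandwich the number $(\lambda,\alpha)$ between two inequalities of opposite sign. Since $\lambda \in \mathcal{P}_+$ and $\alpha$ is simple, $(\lambda,\alpha) \geq 0$. Using the $W$-invariance of the form $(\cdot,\cdot)$ and $w(\lambda) = \mu$, we have
\[
(\lambda,\alpha) = (w\lambda, w\alpha) = (\mu, w\alpha) = -\sum_{j} c_j (\mu, \alpha_j) \leq 0,
\]
because $\mu \in \mathcal{P}_+$ implies $(\mu,\alpha_j) \geq 0$ for every simple root $\alpha_j$. Therefore $(\lambda,\alpha) = 0$, which gives $s_\alpha(\lambda) = \lambda - \frac{2(\lambda,\alpha)}{(\alpha,\alpha)}\alpha = \lambda$.

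Having $s_\alpha(\lambda) = \lambda$ allows me to replace $w$ by the shorter element $w' = ws_\alpha = s_{i_1}\cdots s_{i_{k-1}}$, which still satisfies $w'(\lambda) = w(\lambda) = \mu$. Since $\ell(w') = \ell(w) - 1$, the inductive hypothesis yields $\lambda = \mu$, completing the proof. The only real subtlety is the combinatorial input that a reduced expression ending in $s_{i_k}$ forces $w(\alpha_{i_k}) < 0$; this is standard Coxeter theory and I would simply cite it from a reference such as Humphreys' book on reflection groups or [C].
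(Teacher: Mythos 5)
Your argument is correct: the induction on $\ell(w)$, the sandwich $(\lambda,\alpha)\geq 0$ and $(\lambda,\alpha)=(\mu,w\alpha)\leq 0$ forcing $s_\alpha(\lambda)=\lambda$, and the reduction to $w'=ws_\alpha$ are all sound, and the Coxeter fact you invoke is standard. The paper gives no proof of its own here --- it simply cites [Wi, Lemma 8.3.4] --- and your argument is precisely the standard one found in such references, so there is nothing to reconcile.
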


	\begin{defi}
		The PTE problem
		consists of finding two lists of integers $\mathbb{X}=[x_{1}, x_{2}, \dots, x_{n}]$ and $\mathbb{Y}=[y_{1}, y_{2}, \dots, y_{n}]$,  distinct up to permutation, such that
		$$
		\sum_{i = 1}^{n}x_{i}^{j} = \sum_{i = 1}^{n}y_{i}^{j}, \qquad j = 1, 2, \dots, m.
		$$
		We call $n$ the size of the solution and $m$ the degree. The solution sets $\mathbb{X}$ and $\mathbb{Y}$ are usually represented as $\mathbb{X}=_m \mathbb{Y}$.
		A trivial solution of PTE means that the $x$'s merely form a permutation of the $y$'s.
		
	\end{defi}
	
	\begin{thm}
		If two  lists of integers $\mathbb{X} = [x_{1}\geq x_{2}\geq \dots \geq x_{n}]$ and $\mathbb{Y} = [y_{1}\geq y_{2}\geq \dots\geq y_{n}]$ satisfy
		$$
		\sum_{i = 1}^{n}x_{i}^{j} = \sum_{i = 1}^{n}y_{i}^{j}, \qquad j = 1, 2, \dots, n,
		$$
		then  $\mathbb{X} = \mathbb{Y}$.
	\end{thm}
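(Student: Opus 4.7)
The plan is to rephrase Theorem 2.6 as an injectivity statement for the infinitesimal central character on dominant integral weights of $\mathrm{sl}_n(\mathbb{C})$. After an integer rescaling together with a constant shift (both of which preserve the hypothesis via the binomial theorem), I would normalize to $\sum_i x_i = \sum_i y_i = n(n-1)/2$, and then encode the (sorted) tuples as dominant integral $\mathrm{sl}_n$-weights $\lambda,\mu \in \mathcal{P}_+$ by requiring $X_{ii}.v_\lambda = m_i v_\lambda$ and $X_{ii}.v_\mu = m'_i v_\mu$ with
$$ m_i = x_i - (n-i), \qquad m'_i = y_i - (n-i). $$

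Under this encoding, Popov's formula (Proposition 2.3) collapses to the genuinely symmetric expression $S_k(\lambda) = \sum_i x_i^k - \sum_i (n-i)^k$, and analogously for $\mu$. The hypothesis $\sum x_i^j = \sum y_i^j$ for $j = 1,\dots,n$ therefore gives $S_k(\lambda) = S_k(\mu)$ for each such $k$ at once. Triangular inversion of Popov's formula then produces $\chi_\lambda(\mathfrak{C}_p) = \chi_\mu(\mathfrak{C}_p)$ for $p = 2,\dots,n$; since the Casimirs $\mathfrak{C}_2,\dots,\mathfrak{C}_n$ algebraically generate $Z(\mathrm{sl}_n(\mathbb{C}))$, we conclude $\chi_\lambda = \chi_\mu$. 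Lemma 2.2 then places $\lambda+\rho$ and $\mu+\rho$ in the same $W$-orbit, and since both are regular dominant, Lemma 2.4 forces $\lambda+\rho = \mu+\rho$, whence $\lambda = \mu$ and $\mathbb{X} = \mathbb{Y}$.

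The main obstacle is legitimizing the encoding. For $m_i = x_i - (n-i)$ to define a dominant integral $\mathrm{sl}_n$-weight one needs both the strict inequalities $x_i > x_{i+1}$ throughout and the trace condition $\sum x_i = n(n-1)/2$ on the nose. The trace condition is arithmetic and can be arranged by a preliminary integer rescaling (by a factor depending on the parity of $n$) together with an integer shift, both of which preserve the hypothesis. The strict-decrease condition, which fails whenever $\mathbb{X}$ has repeated entries, is the genuine difficulty; I would address it by an inductive argument that peels off the top-multiplicity group of equal entries — whose common value and multiplicity can be read off from the leading terms $S_p$ in Popov's formula — and reduces the problem to a smaller, strictly decreasing instance of the theorem. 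An alternative would be to transport the entire argument to $\mathrm{gl}_n(\mathbb{C})$, where the trace normalization becomes vacuous and only the strict-decrease issue remains.
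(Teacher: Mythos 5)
Your overall chain (encode as $\mathrm{sl}_n$-weights, apply Popov's formula to get equality of central characters, invoke Harish--Chandra plus the fact that a $W$-orbit meets the dominant cone once) is exactly the paper's. But the two issues you flag as "the main obstacle" are both artifacts of your encoding, and your proposed repairs do not hold up; the paper's encoding dissolves them. The strict-decrease condition is not a genuine difficulty because you never need $\lambda$ itself to be dominant --- you need $\lambda+\rho$ to be dominant, since the Harish--Chandra statement (Lemma 2.2) compares the $\rho$-shifted weights and the orbit lemma is then applied to $\lambda+\rho$ and $\mu+\rho$. The paper sets $\lambda=\sum_i(x_i-x_{i+1}-1)\,\omega_i$, an arbitrary element of $\mathfrak{h}^*$ realized as the highest weight of an irreducible quotient of a Verma module (Popov's formula is valid for all of $\mathfrak{h}^*$, not just $\mathcal{P}_+$); then $\lambda+\rho=\sum_i(x_i-x_{i+1})\,\omega_i$ is dominant precisely because of the \emph{weak} inequalities $x_i\ge x_{i+1}$ in the hypothesis. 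Your inductive "peeling" of the top block of equal entries is therefore unnecessary, and as sketched it is also unsubstantiated: recovering the value and multiplicity of the largest entry from the power sums is not a consequence of "leading terms of $S_p$" and, pushed through, would amount to re-proving the theorem via Newton's identities rather than via representation theory.

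The trace normalization is likewise a non-issue once $\lambda$ is defined through the fundamental weights: the eigenvalues of $X_{ii}$ on $v_\lambda$ come out as $x_i'-\frac{1}{n}\sum_j x_j'$ (with $x_i'=x_i-(n-i)$), which sum to zero automatically, and the common shift $\frac{1}{n}\sum_j x_j'$ is the same for $\mathbb{X}$ and $\mathbb{Y}$ because the $j=1$ equation gives $\sum x_i=\sum y_i$; the binomial theorem then yields $S_k(\lambda)=S_k(\mu)$ with no preliminary normalization. By contrast, your proposed affine rescaling $x_i\mapsto ax_i+b$ with $\sum(ax_i+b)=n(n-1)/2$ can fail to have integer solutions for even $n$ (e.g.\ $n=4$, $\sum x_i=4$ forces $4a+4b=6$), so as written that step is broken; passing to $\mathfrak{gl}_n$ would repair it, but the cleaner route is the paper's. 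Finally, note that the $j=1$ equation is needed once more at the very end: $\lambda=\mu$ only determines the consecutive differences $x_i-x_{i+1}$, and equality of the sums is what upgrades this to $\mathbb{X}=\mathbb{Y}$.
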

	
	\begin{proof}
		Let $x_{i}'$ and $y_{i}'$ satisfy the relations as follows:
		$$(x_1',\cdots,x_n')+(n-1,n-2, \cdots,0)=(x_1, \cdots,x_n),$$
		$$(y_1',\cdots,y_n')+(n-1,n-2, \cdots,0)=(y_1, \cdots,y_n).$$
		Denote $$h_i=X_{ii}-X_{i+1,i+1},i=1,\cdots,n-1$$
		and set $$\lambda=\sum_{i = 1}^{n-1}(x_{i}'-x_{i+1}')\omega_{i},
		\mu=\sum_{i = 1}^{n-1}(y_{i}'-y_{i+1}')\omega_{i},$$ 
		where $\omega_{i} (1 \leq i \leq n-1)$ are the fundamental wights of the special linear Lie algebra $\mbox{sl}_{n}(\mathbb{C})$ satisfying $\omega_{i}(h_{j}) = \delta_{i,j}$ for $1 \leq i, j \leq n-1$. 
		For the highest weight module $V_{\lambda}$, we have 
		$$X_{11}. v_{\lambda} = \frac{(n-1)h_1+(n-2)h_2+\cdots+h_{n-1}}{n}v_{\lambda} = 
		(x_{1}'-\frac{\sum\limits_{i=1}^{n}x_{i}'}{n})v_{\lambda}.$$
		Inductively,
		$$X_{jj}. v_{\lambda} = (X_{j-1,j-1}-h_{j-1}). v_{\lambda} =	(x_{j}'-\frac{\sum\limits_{i=1}^{n}x_{i}'}{n})v_{\lambda}, j=2,\cdots,n.$$
		The equation $$
		\sum_{i = 1}^{n}x_{i}^{j} = \sum_{i = 1}^{n}y_{i}^{j}, \qquad j = 1, 2, \dots, n
		$$ and Equation (2.4) imply $$S_k(\lambda)=S_k(\mu), k=1,\cdots,n.$$
		It follows from  Popov's formula (2.5) and (2.6) that: $$\chi_{\lambda}(\mathfrak{C}_k) = \chi_{\mu}(\mathfrak{C}_k), k=2, \cdots, n.$$
		Since the set $\{\mathfrak{C}_2, \cdots, \mathfrak{C}_{n}\}
		$ algebraically  generates $Z(\mbox{sl}_{n}(\mathbb{C}))$ and the central character is an algebra homomorphism, we have $
		\chi_{\lambda}=\chi_{\mu}$. By Lemma 2.2, we have $\lambda+\rho=w(\mu+\rho)$ for some $w \in W$. Note that $\lambda+\rho=\sum_{i = 1}^{n-1}(x_{i}'-x_{i+1}'+1)\omega_{i}$ and $\mu+\rho=\sum_{i = 1}^{n-1}(y_{i}'-y_{i+1}'+1)\omega_{i}$, which are dominant integral weights. Then, by Lemma 2.7, we get $\lambda=\mu$ and from  the equation $$\lambda(X_{ii})=x_{i}'-\frac{\sum\limits_{j=1}^{n}x_{i}'}{n},
		\mu(X_{ii})=y_{i}'-\frac{\sum\limits_{j=1}^{n}y_{i}'}{n}, i=1,\cdots, n$$to obtain $\mathbb{X} = \mathbb{Y}$.
	\end{proof}

	\section{Basic Structures of  $R_\lambda(\frg)$ and $R_{\lambda,\nu}(\frg)$}

	In this section, we will introduce the strongly commuting ring  $R_\lambda(\frg)$ defined  by Kostant  [Ko1] and recall some related results studied by some other authors [B, Pan, Ki1, Ki2]. 
	
	For $\lambda \in \mathcal{P}_{+}$, we assume that the irreducible $\mathfrak{g}$-module $V_{\lambda}$ with highest weight $\lambda$ is finite-dimensional. 
	Denote $\Pi(V_{\lambda})$ the weight set of $\frg$-module	$V_{\lambda}$ and $m_{\lambda}^{\mu}$ the dimension of the $\mu$-weight space $V_{\lambda}^{\mu}$ of $V_{\lambda}$ for $\mu \in \Pi(V_{\lambda})$.

	\begin{defi} The $\mathfrak{g}$-module $V_{\lambda}$  is said to be small, if twice of any root of $\frg$ is not a weight  for $V_{\lambda}$.
	\end{defi} 
	
	\begin{defi}A finite-dimensional irreducible $\frg$-module $V_\lambda$  with highest weight $\lambda$ is said to be minuscule if $\frac{2(\lambda, \alpha)}{(\alpha, \alpha)} = 0, 1, -1$ for all roots $\alpha \in \Phi$. 
	\end{defi}
	
	The following result is given in  Theorem 1.1 of [R]:
	\begin{prop}
		Assume $\lambda$ is minuscule. Then every constituent of $\End V_{\lambda}$ is small.
	\end{prop}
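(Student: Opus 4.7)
The plan is to show directly that no vector of the form $2\alpha$, for $\alpha\in\Phi$, ever appears as a weight of $\End V_\lambda$; since the weights of any irreducible constituent of $\End V_\lambda$ form a subset of the weights of $\End V_\lambda$, this is enough.

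First I would identify $\End V_\lambda\simeq V_\lambda^{*}\otimes V_\lambda$ as $\frg$-modules, so that the weight set of $\End V_\lambda$ is precisely $\{\mu-\nu\mid\mu,\nu\in\Pi(V_\lambda)\}$. If $2\alpha$ were a weight of some constituent $V_\gamma\subset\End V_\lambda$, then $2\alpha=\mu-\nu$ for some $\mu,\nu\in\Pi(V_\lambda)$, so in particular both $\nu$ and $\nu+2\alpha$ would lie in $\Pi(V_\lambda)$. Invoking the standard $\mathfrak{sl}_2$-string theory attached to $\alpha$, the fact that $\nu$ and $\nu+2\alpha$ both occur forces the intermediate weight $\nu+\alpha$ to lie in $\Pi(V_\lambda)$ as well, so the full $\alpha$-string of length $3$ sits inside $\Pi(V_\lambda)$.

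Next I would derive the contradiction from the minuscule hypothesis. Since $\lambda$ is minuscule, the Weyl group $W$ acts transitively on $\Pi(V_\lambda)$, so any $\nu\in\Pi(V_\lambda)$ can be written $\nu=w\lambda$ for some $w\in W$. Using the $W$-invariance of the Killing form I would compute
\[
\frac{2(\nu,\alpha)}{(\alpha,\alpha)}=\frac{2(\lambda,w^{-1}\alpha)}{(w^{-1}\alpha,w^{-1}\alpha)}\in\{-1,0,1\},
\]
since $w^{-1}\alpha$ is again a root and $\lambda$ is minuscule. The same argument applied to $\mu=\nu+2\alpha\in\Pi(V_\lambda)$ gives $\tfrac{2(\mu,\alpha)}{(\alpha,\alpha)}\in\{-1,0,1\}$. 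On the other hand, a direct calculation yields
\[
\frac{2(\mu,\alpha)}{(\alpha,\alpha)}=\frac{2(\nu,\alpha)}{(\alpha,\alpha)}+4,
\]
which cannot simultaneously lie in $\{-1,0,1\}$ together with $\tfrac{2(\nu,\alpha)}{(\alpha,\alpha)}$. This contradiction shows $2\alpha\notin\Pi(\End V_\lambda)$, hence $2\alpha\notin\Pi(V_\gamma)$ for every constituent $V_\gamma$, as required.

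There is no serious obstacle here; the only point deserving care is propagating the minuscule condition from the highest weight $\lambda$ to an arbitrary weight $\nu\in\Pi(V_\lambda)$, which is handled cleanly by the Weyl-transitivity on $\Pi(V_\lambda)$. Everything else reduces to routine $\mathfrak{sl}_2$-string and root-system bookkeeping.
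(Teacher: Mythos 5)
Your proof is correct, and it takes a somewhat different route from the paper's. The paper invokes Kumar's refinement of the PRV conjecture to assert that every weight of $\End V_{\lambda}$ is $W$-conjugate to some $\lambda - w\lambda$, and then derives the contradiction from the norm equality $(\lambda,\lambda)=(w\lambda,w\lambda)=(\lambda-2\beta,\lambda-2\beta)$, which forces $\tfrac{2(\lambda,\beta)}{(\beta,\beta)}=2$. You instead use only the elementary description of the weights of $\End V_{\lambda}\simeq V_{\lambda}^{*}\otimes V_{\lambda}$ as differences $\mu-\nu$ of weights of $V_{\lambda}$, together with the standard equivalence between the pairing condition $\tfrac{2(\lambda,\alpha)}{(\alpha,\alpha)}\in\{0,\pm1\}$ and Weyl-transitivity on $\Pi(V_{\lambda})$ (which the paper itself also uses as a characterization of minuscule), and then observe that $\nu$ and $\nu+2\alpha$ cannot both have $\alpha$-pairing in $\{-1,0,1\}$ since these pairings differ by $4$. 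This buys you a proof that is self-contained at the level of basic weight theory, with no appeal to [Ku]; the paper's version is shorter on the page but rests on a deeper external result. Two minor remarks: the $\mathfrak{sl}_2$-string paragraph (producing the intermediate weight $\nu+\alpha$) is never used in your contradiction and can be deleted; and you should state explicitly that you are using the equivalence of the two definitions of minuscule, since the paper's Definition 3.2 is the pairing condition rather than transitivity.
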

	\begin{proof}
		Since $\lambda$ is minuscule, we have  $\frac{2(\lambda, \alpha)}{(\alpha, \alpha)} = 0, 1, -1$ for all roots $\alpha \in \Phi$. By the Kumar's result in [Ku], every weight in $\End V_{\lambda}$ is Weyl group $W$-conjugate to $\lambda-w\lambda$ for some $w\in W$. If $\beta\in \Phi$ and $2\beta$ is a weight in $\End V_{\lambda}$, say $2\beta=\lambda-w\lambda$ for some $w\in W$. We have $$(\lambda,\lambda)=(w\lambda,w\lambda)=(\lambda-2\beta,\lambda-2\beta),$$and then
		$\frac{2(\lambda, \beta)}{(\beta, \beta)} = 2$. This is a contradiction. Hence
		every constituent of $\End V_{\lambda}$ is small. 
	\end{proof}

	Let $\mathscr{P}(\frg)$ be the algebra of all polynomials on $\frg$ and 
	$\frh$ be a Cartan subalgebra of $\frg$. Assume
	$$\phi : \mathscr{P}(\mathfrak{g}) \rightarrow \mathscr{P}(\mathfrak{h}); p\rightarrow p| _{\frh}$$is the restriction map. We know that  $\phi : \mathscr{P}(\mathfrak{g})^{\frg} \rightarrow \mathscr{P}(\mathfrak{h})^{W}$ is an algebra isomorphism. 
	Let $S(\mathfrak{g})$ be the symmetric algebra over the dual space to $\mathfrak{g}$.
	Then the Killing isomorphism $\mathscr{K}:\frg\rightarrow \frg^{*}$ can be extended to a $\frg$-module isomorphism $$\tilde{\mathscr{K}}:S(\frg)\rightarrow \mathscr{P}(\frg)=S(\frg^{*}).$$
	Due to the ad-invariance of the Killing form, we get $\tilde{\mathscr{K}}(S(\frg)^{\frg})=\mathscr{P}(\frg)^{\frg}$.We  denote these isomorphic algebras $S(\frg)^{\frg}\simeq \mathscr{P}(\frg)^{\frg}\simeq Z(\frg)$ by $J$ in common.

	\begin{defi} For $\nu \in \mathcal{P}_{+}$, 
		the space $$J_{\nu}(\mathfrak{g})=(V_{\nu}\otimes {\mathscr{P}}(\frg))^{\frg}$$ is called the $J$-module of $\mathfrak{g}$-covariants of type $\nu$. 
	\end{defi}

	It was shown  by Kostant [Ko2] that  $J_{\nu}(\mathfrak{g})$ is a free $J$-module
	with rank $ m_{\nu}^{0}$.
	Let $\{f_1 ,\cdots , f_l\}$ be  a set of basic invariants of $\mathscr{P}(\mathfrak{g})^{\frg}$. Then
	$$J_{\nu}(\mathfrak{g})=(f_1 ,\cdots , f_l)J_{\nu}(\mathfrak{g})\oplus H_{\nu},$$where $H_{\nu}$ is a  graded finite-dimensional vector space over $\mathbb{C}$.
	Any homogeneous $\mathbb{C}$- basis for $H_{\nu}$ is also  a free basis for the free $\mathscr{P}(\mathfrak{g})^{\frg}$-module $J_{\nu}(\mathfrak{g})$.
	

	Since the  zero-weight space $V_{\nu}^{0}$ is a Weyl group $W$-module, the associative algebra $$J_{\nu}(\mathfrak{h}) = (V_{\nu}^{0} \otimes \mathscr{P}(\mathfrak{h}))^W$$ is a $\mathscr{P}(\mathfrak{h})^{W}$-module. 
	
	The elements in 
	$J_{\nu}(\mathfrak{g})$ 
	can be identified with $\frg$-equivariant morphisms from $\frg$ to $V_{\nu}$. More precisely, an element $\sum v_i\otimes p_i \in J_{\nu}(\mathfrak{g})$ defines the morphism that takes $x\in \frg$ to $\sum p_i(x)v_i \in V_{\nu}$.
	
	Restricting a $\frg$-equivariant morphism from $\frg$ to $V_{\nu}$ to $\frh$ yields a $W$-equivariant morphism from $\frh$ to $V_{\nu}^{0}$. In other words, we obtain 
	a homomorphism $\varphi_{\nu} : J_{\nu}(\mathfrak{g}) \rightarrow J_{\nu}(\mathfrak{h})$ of free graded $J$-modules with the same rank $m_{\nu}^{0}$.
	The following theorem is due to Broer [B]:

	\begin{thm} The homomorphism $\varphi_{\nu} : J_{\nu}(\mathfrak{g}) \rightarrow J_{\nu}(\mathfrak{h})$
		is a $J$-module isomorphism if and only if $\mathfrak{g}$-module $V_{\nu}$  is small.
	\end{thm}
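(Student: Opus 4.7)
My plan is to reduce the equivalence to a comparison of Hilbert series, using that both $J_\nu(\frg)$ and $J_\nu(\frh)$ are graded free $J$-modules of the same rank $m_\nu^0$.

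First I would establish injectivity of $\varphi_\nu$ for every $\nu$: an element of $J_\nu(\frg)$ is identified with a $\frg$-equivariant polynomial map $f:\frg\to V_\nu$, and if $f|_\frh=0$ then $f$ vanishes on the $G$-saturation $G\cdot\frh$; since the semisimple locus is dense in $\frg$ and every semisimple element is $G$-conjugate into $\frh$, this forces $f\equiv 0$. Kostant's result recalled in the text gives that $J_\nu(\frg)$ is free over $J$ of rank $m_\nu^0$; on the $\frh$-side, the Chevalley--Shephard--Todd theorem combined with the $W$-module splitting $\mathscr{P}(\frh)\cong \mathscr{P}(\frh)^W\otimes\mathscr{H}_W$ and the classical identification $\mathscr{H}_W\cong\bbC[W]$ yields $J_\nu(\frh)\cong \mathscr{P}(\frh)^W\otimes V_\nu^0$, again free of rank $m_\nu^0$ over $J\cong\mathscr{P}(\frh)^W$.

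Because $\varphi_\nu$ is an injective graded $J$-homomorphism between free $J$-modules of equal rank, it is an isomorphism iff both sides have the same Hilbert series, equivalently by graded Nakayama iff the induced map on the $J_+$-quotients is a linear isomorphism of finite-dimensional $\bbC$-spaces. The Hilbert series of $J_\nu(\frh)$ is Molien's formula
$$
H(J_\nu(\frh),t)=\frac{1}{|W|}\sum_{w\in W}\frac{\tr(w\mid V_\nu^0)}{\det_\frh(1-tw)},
$$
while the Hilbert series of $J_\nu(\frg)$ is computed from Kostant's decomposition $\mathscr{P}(\frg)\cong J\otimes\mathcal{H}$ together with Hesselink's $q$-analogue of the Kostant multiplicity formula for the graded multiplicity $[V_\nu:\mathcal{H}]_t$, producing an alternating sum over $W$ that involves the weight multiplicities of $V_\nu$.

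The main obstacle is to show that the equality of these two Hilbert series is precisely equivalent to smallness of $V_\nu$. After rewriting both sides over a common denominator $\prod_{\alpha\in\Phi}(1-te^\alpha)$ in torus variables, the discrepancy collapses into an alternating sum supported on those weights of $V_\nu$ of the form $2\alpha$ with $\alpha$ a root, and one checks that it vanishes iff all such multiplicities do, i.e.\ iff $V_\nu$ is small. The extreme cases serve as sanity checks: for $\nu=0$ one recovers Chevalley's restriction theorem, and for minuscule $\nu$ the smallness hypothesis is automatic since every weight of $V_\nu$ is $W$-conjugate to $\nu$ and no minuscule weight can coincide with $2\alpha$ for any root $\alpha$.
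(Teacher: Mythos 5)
The paper does not actually prove this statement; it is quoted verbatim as Broer's theorem with a citation to [B], so there is no internal argument to measure your sketch against. Judged on its own terms, your reduction is sound as far as it goes: injectivity of $\varphi_{\nu}$ via density of the semisimple locus is correct, both modules are indeed graded free over $J$ of rank $m_{\nu}^{0}$ (Kostant on the $\frg$-side, Chevalley plus $\mathscr{P}(\frh)\cong \mathscr{P}(\frh)^{W}\otimes \bbC[W]$ on the $\frh$-side), and an injective graded map between graded free modules of equal rank is an isomorphism precisely when the Hilbert series coincide. Your sanity checks ($\nu=0$ recovering Chevalley restriction; minuscule $\nu$ giving $m_{\nu}^{0}=0$ so both sides vanish) are also fine.

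The gap is the final step, which is where the entire content of the theorem lives. You assert that after putting Molien's formula and the Hesselink/Lusztig $q$-analogue over a common denominator, ``the discrepancy collapses into an alternating sum supported on those weights of $V_{\nu}$ of the form $2\alpha$'' and that ``one checks'' it vanishes iff $V_{\nu}$ is small. Neither claim is established, and neither is routine: the difference of the two Hilbert series is an alternating sum over $W$ of terms involving all weight multiplicities of $V_{\nu}$, and there is no visible cancellation isolating the multiplicities of the weights $2\alpha$; even granting such a reduction, vanishing of an alternating sum does not by itself force each summand to vanish, so the ``only if'' direction needs a separate positivity or explicit-covariant argument. Broer's actual proof does not proceed by a direct generating-function manipulation: he compares the sum of the generalized exponents of $V_{\nu}$ with the sum of the $W$-degrees of $V_{\nu}^{0}$, proves an inequality between them by a filtration/degeneration argument, identifies the defect with a count governed by the weights $2\alpha$, and for the converse exhibits an explicit covariant outside the image when some $2\alpha$ is a weight. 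As written, your step 5 restates the theorem rather than proving it.
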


	\begin{prop}If $\lambda$ is  minuscule, the two free $J$-modules 
		$(\End V_\lambda\otimes \mathscr{P}(\frg))^{\mathfrak{g}}$ and $(\End_{\frh}V_\lambda\otimes \mathscr{P}(\frh))^{W}$ are isomorphic.
	\end{prop}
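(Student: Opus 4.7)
The plan is to reduce the statement to Broer's theorem (Theorem 3.5) by decomposing $\End V_\lambda$ into isotypic components and checking that each one is small.

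First I would write $\End V_\lambda \simeq V_\lambda \otimes V_\lambda^{*}$ and decompose it as a $\frg$-module into irreducible constituents
\begin{equation*}
\End V_\lambda \;\simeq\; \bigoplus_{\nu} V_\nu^{\oplus m_\nu},
\end{equation*}
where $m_\nu = \dim \Hom_\frg(V_\nu,\End V_\lambda)$. Tensoring with $\mathscr{P}(\frg)$ and taking $\frg$-invariants then yields
\begin{equation*}
(\End V_\lambda \otimes \mathscr{P}(\frg))^{\frg} \;\simeq\; \bigoplus_{\nu} J_\nu(\frg)^{\oplus m_\nu},
\end{equation*}
which is a direct sum of free $J$-modules.

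For the right-hand side, I would identify $\End_\frh V_\lambda$ with the zero-weight subspace of $\End V_\lambda$ under the adjoint action of $\frh$: an endomorphism commutes with $\frh$ precisely when it preserves each weight space, and this is exactly the condition of being $\frh$-invariant, equivalently of lying in $(\End V_\lambda)^{0}$. Under the decomposition above this gives $\End_\frh V_\lambda \simeq \bigoplus_\nu (V_\nu^{0})^{\oplus m_\nu}$ as $W$-modules. Tensoring with $\mathscr{P}(\frh)$ and taking $W$-invariants yields
\begin{equation*}
(\End_\frh V_\lambda \otimes \mathscr{P}(\frh))^{W} \;\simeq\; \bigoplus_{\nu} J_\nu(\frh)^{\oplus m_\nu}.
\end{equation*}

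Now I would apply Proposition 3.3: since $\lambda$ is minuscule, every irreducible constituent $V_\nu$ appearing in $\End V_\lambda$ is small. Broer's theorem (Theorem 3.5) therefore gives, for each such $\nu$, a $J$-module isomorphism $\varphi_\nu : J_\nu(\frg) \stackrel{\sim}{\longrightarrow} J_\nu(\frh)$. Summing over $\nu$ with multiplicities $m_\nu$ assembles these into the desired isomorphism of the two free $J$-modules.

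The main conceptual step to be careful with is the identification $\End_\frh V_\lambda = (\End V_\lambda)^{0}$ and the verification that the constituent-wise Broer isomorphisms indeed assemble into a single $J$-linear isomorphism compatible with the restriction map from $\frg$ to $\frh$; this is really just the naturality of $\varphi_\nu$ in $\nu$, but it deserves to be spelled out so that the two sides are compared through the same underlying restriction procedure rather than through unrelated abstract isomorphisms.
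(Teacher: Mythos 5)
Your proposal is correct and follows essentially the same route as the paper: decompose $\End V_\lambda$ into irreducible constituents, invoke Proposition 3.3 (via Reeder's theorem) to see each constituent is small, and patch together the Broer isomorphisms $\varphi_\nu$ of Theorem 3.5. Your write-up is merely more explicit about the identification $\End_\frh V_\lambda \simeq (\End V_\lambda)^0$ and the compatibility of the constituent-wise isomorphisms with the restriction map, which the paper leaves implicit.
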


	\begin{proof}By Theorem 1.1 in [R], every constituent of  $\End V_{\lambda} \simeq V_{\lambda} \otimes V_{\lambda}^* = \bigoplus c_{\nu}V_{\nu}$ is small. This implies that 
		the restriction morphism
		$$
		\tilde{\varphi}_{\lambda} :  (\End V_\lambda\otimes \mathscr{P}(\frg))^{\mathfrak{g}} \rightarrow (\End_{\frh}V_\lambda\otimes \mathscr{P}(\frh))^{W}
		$$
		is an isomorphism by Theorem 3.5 through patching together all the homomorphisms $\varphi_{\nu} $ in the constituents.
		
	\end{proof}

	It is well known that there is a $\frg$-submodule $E$ of $U(\frg)$ such that the multiplication
	$$ Z(\frg)\otimes E\rightarrow U(\frg)$$
	is a $\frg$-module isomorphism and the $\mathfrak{g}$-endomorphism algebra of type $\lambda$
	$$R_{\lambda}(\mathfrak{g}) = (\End V_\lambda\otimes U(\frg))^{\mathfrak{g}}$$
	is a free $Z(\frg)$-module, which was named the strongly commuting ring by Kostant [Ko1]. 
	Set 
	$$R_{\lambda}(\frh) = (\End_{\frh}V_\lambda\otimes U(\frh))^{W}.$$

	\begin{prop}If $\lambda$ is  minuscule, the two free $J$-modules
		$R_{\lambda}(\mathfrak{g})$ and $R_{\lambda}(\frh)$ are  isomorphic with the same rank  $\dim V_{\lambda}$.
	\end{prop}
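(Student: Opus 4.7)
The plan is to deduce this from Proposition 3.6 by passing from $U(\frg)$ and $U(\frh)$ to their symmetric/polynomial counterparts, where the minuscule hypothesis has already been cashed in.

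First I would use the PBW symmetrization map $\sigma: S(\frg) \to U(\frg)$. This is a $\frg$-module isomorphism whose restriction to invariants gives the algebra isomorphism $\sigma: S(\frg)^\frg \to Z(\frg)$. Tensoring with $\End V_\lambda$ and taking $\frg$-invariants yields an isomorphism
$$(\End V_\lambda \otimes S(\frg))^\frg \;\simeq\; R_\lambda(\frg).$$
One has to check that this is a $J$-module isomorphism: that $\sigma$ intertwines multiplication by $f \in S(\frg)^\frg$ with multiplication by $\sigma(f) \in Z(\frg)$. This holds because $\sigma(f)$ is already central in $U(\frg)$, so symmetrizing a product $fg \in S(\frg)$ commutes past the central factor and gives $\sigma(fg) = \sigma(f)\sigma(g)$.

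Next, the Killing isomorphism $\tilde{\mathscr{K}}: S(\frg) \to \mathscr{P}(\frg)$ introduced earlier is $\frg$-equivariant and sends $S(\frg)^\frg$ onto $\mathscr{P}(\frg)^\frg$, so it supplies a $J$-module isomorphism
$$(\End V_\lambda \otimes S(\frg))^\frg \;\simeq\; (\End V_\lambda \otimes \mathscr{P}(\frg))^\frg.$$
By Proposition 3.6, which is precisely where minuscularity is used, the right-hand side is $J$-isomorphic to $(\End_\frh V_\lambda \otimes \mathscr{P}(\frh))^W$. The same two steps on the Cartan side are essentially free: $U(\frh) = S(\frh)$ because $\frh$ is abelian, and $\tilde{\mathscr{K}}$ restricts to a $W$-equivariant isomorphism $S(\frh) \to \mathscr{P}(\frh)$. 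Composing the whole chain produces the desired $J$-isomorphism $R_\lambda(\frg) \simeq R_\lambda(\frh)$.

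For the rank statement, minuscularity forces every weight space of $V_\lambda$ to be one-dimensional, so $\End_\frh V_\lambda$ is spanned by the projections onto the weight lines and has dimension $\dim V_\lambda$. Since $(\End_\frh V_\lambda \otimes \mathscr{P}(\frh))^W$ is free over $\mathscr{P}(\frh)^W \simeq J$ of rank $\dim \End_\frh V_\lambda$, transferring through the chain above yields that $R_\lambda(\frg)$ and $R_\lambda(\frh)$ are free $J$-modules of rank $\dim V_\lambda$. The only real obstacle is the first reduction: verifying $J$-module compatibility of the symmetrization step, since $\sigma$ is not a ring map in general; the remaining steps are Proposition 3.6, the Killing isomorphism, and the abelian identification $U(\frh) = S(\frh)$, all of which are standard.
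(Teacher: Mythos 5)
Your proposal follows essentially the same route as the paper: symmetrize $U(\frg)\leftrightarrow S(\frg)$, pass to $\mathscr{P}(\frg)$ via the Killing form, apply the restriction isomorphism of Proposition 3.6 (the only place minuscularity enters), and run the same steps backwards on the Cartan side, with the rank read off from $\dim\End_\frh V_\lambda=\dim V_\lambda$ since the weight spaces are one-dimensional. The one place you add detail beyond the paper is also the one place you go wrong: the claim that $\sigma(fg)=\sigma(f)\sigma(g)$ for $f\in S(\frg)^\frg$ because ``$\sigma(f)$ is central.'' Symmetrization is not multiplicative on invariants --- centrality of $\sigma(f)$ in $U(\frg)$ says nothing about how $\sigma$ interacts with the commutative product in $S(\frg)$, and in general $\sigma(fg)-\sigma(f)\sigma(g)$ is a nonzero invariant of lower filtration degree (this failure is exactly what the Duflo/Harish--Chandra correction repairs). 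So $\id\otimes\sigma$ is not literally a $J$-module map when $J$ acts through $S(\frg)^\frg$ on one side and through $Z(\frg)$ on the other. The paper asserts the same chain of ``$J$-module isomorphisms'' without justification, so you are in the same boat; the honest repair is to invoke Kostant's decompositions $S(\frg)\simeq S(\frg)^\frg\otimes\mathcal H$ and $U(\frg)\simeq Z(\frg)\otimes E$ with $E=\sigma(\mathcal H)$, which show both sides are free $J$-modules whose ranks agree (or to pass to associated graded modules), rather than to claim the symmetrization itself is $J$-linear. With that correction your argument matches the paper's and the conclusion stands.
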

	
	\begin{proof}
		Let $S_{p}$ be the symmetric group.
		Then the symmetrization map 
		$$\theta:S(\frg)\rightarrow U(\frg);y_1\cdots y_p
		\mapsto \frac{\sum\limits_{\sigma \in S_p}y_{\sigma (1)}\cdots y_{\sigma (p)}}{p!}, \ \forall \ y_i \in \frg, p \in \mathbb{N}$$ is a  $\frg$-module isomorphism. 
		We get the following $J$-module isomorphisms
		$$R_{\lambda}(\mathfrak{g}) = (\End V_\lambda\otimes U(\frg))^{\mathfrak{g}} 
		\stackrel{\mbox{id}\otimes \theta^{-1}}{\rightleftharpoons}
		(\End V_\lambda\otimes S(\frg))^{\mathfrak{g}}
		\stackrel{\mbox{id}\otimes \tilde{K}}{\rightleftharpoons}
		(\End V_\lambda\otimes \mathscr{P}(\frg))^{\mathfrak{g}} $$
		and if $\lambda$ is minuscule, we have the following $J$-module isomorphisms
		$$ (\End V_\lambda\otimes \mathscr{P}(\frg))^{\mathfrak{g}} 
		\stackrel{	\tilde{\varphi}_{\lambda}}{\rightleftharpoons}  
		(\End_{\frh}V_\lambda\otimes \mathscr{P}(\frh))^{W}   
		\stackrel{\mbox{id}\otimes \tilde{K}^{-1}}{\rightleftharpoons} 
		(\End_{\frh}V_\lambda\otimes S(\frh))^{W}
		\stackrel{\mbox{id}\otimes \theta}{\rightleftharpoons} 
		R_{\lambda}(\frh).$$
		Now we arrive that 
		$$\psi_{\lambda} = (\mbox{id}\otimes \theta) \circ 
		(\mbox{id}\otimes \tilde{K}^{-1}) \circ 
		\tilde{\varphi}_{\lambda} \circ 
		(\mbox{id}\otimes \tilde{K}) \circ 
		(\mbox{id}\otimes \theta^{-1}) : R_{\lambda}(\frg)\rightarrow R_{\lambda}(\frh)$$ 
		is a $J$-module isomorphism if $\lambda$ is minuscule.

	\end{proof}

	For any $\nu \in \mathcal{P}_{+}$, let$$\tilde{\pi}_{\nu}
	\colon \End V_\lambda\otimes U(\frg) \rightarrow   \End V_\lambda\otimes \End V_\nu $$
	be the surjective homomorphism defined by $\tilde{\pi}_{\nu}=\mbox{id}\otimes \pi_{\nu}$. Now both 
	$ \End V_\lambda\otimes U(\frg) $ and $\End V_\lambda\otimes \End V_\nu $ are completely reducible as $\frg$-modules with respect to the adjoint actions.
	
	We denote $$R_{\lambda,\nu}(\frg)=(\End V_\lambda\otimes\End V_\nu)^\frg.$$
	Since 
	$\tilde{\pi}_{\nu}$ is a surjective $\frg$-module map, it carries invariants onto invariants and then by restriction $\tilde{\pi}_{\nu}$ induces a surjective homomorphism $$\gamma_{\nu}\colon  (\End V_\lambda\otimes U(\frg))^\frg
	\rightarrow (\End V_\lambda\otimes\End V_\nu)^\frg,$$i. e. 
	$$\gamma_{\nu}(R_{\lambda}(\frg))=R_{\lambda,\nu}(\frg).$$
	
	Following Kostant [Ko1], we make the following definition.
	\begin{defi}
		We say that $\lambda$ is totally subordinate to $\nu$ if the number of irreducible constituents
		in $V_\lambda\otimes V_\nu$ is equal to $d_\lambda\colon=\dim V_\lambda$.
	\end{defi}
	If $\lambda$ is totally subordinate to $\nu$, then $\dim R_{\lambda,\nu}(\frg)$ is equal to the free rank of  $Z(\frg)$-module 
	$R_\lambda(\frg)$ [Ko1].

	\begin{prop}Assume $\lambda$ is a  minuscule weight and totally subordinate to $\nu$. If $s_i$ is a free basis for $Z(\frg)$-module 
		$R_\lambda(\frg)$, then $\gamma_{\nu}(s_i)$ is a basis of $R_{\lambda,\nu}(\frg) $ as the  $\mathbb{C}$-vector space.
		
	\end{prop}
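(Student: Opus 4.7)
The plan is a direct dimension count coupled with a semilinear compatibility property of $\gamma_\nu$. The key observation is that every central element $z \in Z(\frg)$ acts on the irreducible module $V_\nu$ by the scalar $\chi_\nu(z)$, i.e.\ $\pi_\nu(z) = \chi_\nu(z)\,\id_{V_\nu}$. Because $\gamma_\nu$ is induced by $\id \otimes \pi_\nu$, it follows immediately that for any $s \in R_\lambda(\frg)$ and $z \in Z(\frg)$,
$$\gamma_\nu(z \cdot s) = \chi_\nu(z)\, \gamma_\nu(s),$$
where $z \cdot s$ denotes the $Z(\frg)$-action on $R_\lambda(\frg) \subset \End V_\lambda \otimes U(\frg)$ coming from multiplication by $z$ in the second tensor factor.

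Using that $\{s_i\}$ is a free $Z(\frg)$-basis of $R_\lambda(\frg)$, every element $s$ admits a unique expansion $s = \sum_i z_i s_i$ with $z_i \in Z(\frg)$, so
$$\gamma_\nu(s) = \sum_i \chi_\nu(z_i)\, \gamma_\nu(s_i).$$
Since $\gamma_\nu$ is surjective onto $R_{\lambda,\nu}(\frg)$, this shows that $\{\gamma_\nu(s_i)\}$ spans $R_{\lambda,\nu}(\frg)$ as a $\mathbb{C}$-vector space. Invoking the Kostant dimension equality recalled just before Definition 3.8, the total subordination of $\lambda$ to $\nu$ forces $\dim_{\mathbb{C}} R_{\lambda,\nu}(\frg)$ to equal the free $Z(\frg)$-rank of $R_\lambda(\frg)$, which is exactly the cardinality of $\{s_i\}$. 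A spanning set of the correct cardinality is automatically linearly independent, hence a $\mathbb{C}$-basis.

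The only step requiring genuine attention is the semilinear compatibility in the first paragraph, which rests squarely on $Z(\frg)$ acting by scalars on $V_\nu$. Everything else is pure linear algebra, and there is no substantive obstacle. The minuscule hypothesis on $\lambda$ is not consumed in this argument in any essential way; it enters only through Proposition 3.8 to identify the $Z(\frg)$-rank of $R_\lambda(\frg)$ with $\dim V_\lambda$, a refinement that will be exploited in the explicit matrix constructions carried out in the later sections.
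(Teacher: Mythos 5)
Your argument is correct and is essentially the paper's own proof: both rest on the observation that $Z(\frg)$ maps to scalars under $\gamma_\nu$ (so the images $\gamma_\nu(s_i)$ span by surjectivity) combined with the dimension count $\dim_{\mathbb{C}} R_{\lambda,\nu}(\frg) = \operatorname{rank}_{Z(\frg)} R_\lambda(\frg)$ coming from total subordination. You have merely written out in full the steps the paper compresses into one sentence, and your remark that the minuscule hypothesis is not actually used here matches the paper's proof as well.
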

	
	\begin{proof}
		
		Since $Z(\frg)$ maps into scalars under $\gamma_{\nu}$  and 
		$$R_{\lambda,\nu}=(\End V_\lambda\otimes\End V_\nu)^\frg$$ has dimension $d_{\lambda}$, the proposition  follows from the surjectivity of $\gamma_{\nu}$.

	\end{proof}

	Two kinds of special elements play a vital role in constructing the free basis for $R_\lambda(\frg)$.
	The one due to Kostant is defined as follows.
	Consider the map
	$$\delta_\lambda: U(\mathfrak{g}) \rightarrow \End V_\lambda\otimes U(\mathfrak{g})$$
	defined by
	$$ \delta_\lambda(x)= \pi_{\lambda}(x)\otimes 1 +  \textrm{id}\otimes x  \text{ for } x \in \mathfrak{g},$$
	which is extended to be a homomorphism of associative algebras. For  a fixed element  $z$ of $  Z(\mathfrak{g}) $, Kostant [Ko1] defined such particular element 
	\begin{equation}
		M_{\lambda}(z) = \delta_{\lambda}(z) - \pi_{\lambda}(z) \otimes 1 -  \textrm{id} \otimes z \in R_\lambda(\frg). 
	\end{equation}
	If $V_{\lambda}$ is finite-dimensional with dimension $d_{\lambda}$, then $M_{\lambda}(z)$ is  a $d_{\lambda} \times d_{\lambda}$ matrix with entries in $U(\mathfrak{g})$. Now for any $\nu\in \mathcal{P}_{+}$, we consider the map
	$$\delta_{\lambda,\nu}: U(\frg) \rightarrow \End V_\lambda\otimes  \End V_\nu$$
	defined by
	$$ \delta_{\lambda,\nu}(x)= \pi_{\lambda}(x)\otimes \textrm{id} + \textrm{id} \otimes \pi_\nu(x) \text{ for } x\in \frg,$$
	which is extended to be a homomorphism of associative algebras.  Then we define
	\begin{equation}
		M_{\lambda,\nu}(z) = \delta_{\lambda,\nu}(z) - \pi_{\lambda}(z) \otimes \textrm{id} -  \textrm{id} \otimes \pi_{\nu}(z) \in R_{\lambda,\nu}(\frg). 
	\end{equation}
	It is known from [Ko1] that the infinitesimal characters occurring in the tensor product $V_{\lambda}\otimes V_{\nu}$ are of the form $\chi_{\nu+\lambda_{i}}$, where $\{\lambda_1, \cdots, \lambda_k\}$ denotes the set of distinct weights of $V_{\lambda}$. The eigenvalues of $M_{\lambda,\nu}(z)$ on the space 
	$V_{\lambda}\otimes V_{\nu}$ are therefore of the form 
	$$\chi_{\nu+\lambda_i}(z)-\chi_{\lambda}(z)-\chi_{\nu}(z).$$

	Another kind of elements in $(\End V_\lambda\otimes \mathscr{P}(\frg))^{\frg}$, which was called 
	M-type elements defined in [Ki1,Ki2]. We construct  a similar kind of  elements in $R_{\lambda}(\mathfrak{g})$ as follows.
	Assume $\{x_1,\cdots, x_m\}$ is a basis of $\frg$ and 
	$\{x_1^{*},\cdots,x_m^{*} \}$ is its dual basis relative to the Killing form $B(.,.)$. Write $$[x_i, x_j]=\sum\limits_{s=1}^{m}c_{i,j}^{s}x_s, 
	[x_i, x_j^{*}]=\sum\limits_{s=1}^{m}d_{i,j}^{s}x_s^{*},
	\ c_{i,j}^{s},  d_{i,j}^{s} \in \mathbb{C}, i,j =1,\cdots , m. $$
	For $x \in \frg$, define the substitution operator $$i_x:U(\frg)\rightarrow U(\frg)$$
	$$i_{x}(y) = B(x,y), \  \ \forall \ y \in \frg,$$
	$$i_x(u_1 \cdots u_k)=\sum\limits_{j=1}^{k}i_x(u_j)u_1\cdots u_{j-1}u_{j+1}\cdots u_k, \  \ \forall \  u_i \in \frg.$$
	\begin{prop}For any  $z\in Z(\frg)$, such element
		\begin{equation}
			M_{z}(\lambda)=\sum\limits_{i=1}^m \pi_{\lambda}(x_i^{*})\otimes i_{x_i}(z)
		\end{equation}
		belongs to $ R_{\lambda}(\frg). $
		
	\end{prop}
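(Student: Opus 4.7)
The plan is to verify directly that $\ad(y)\cdot M_{z}(\lambda)=0$ for every $y\in\frg$. Using the Leibniz rule on the tensor factors $\End V_\lambda\otimes U(\frg)$, I would write
\begin{equation*}
\ad(y)\,M_{z}(\lambda)=\sum_{i=1}^{m}\pi_{\lambda}([y,x_{i}^{*}])\otimes i_{x_{i}}(z)+\sum_{i=1}^{m}\pi_{\lambda}(x_{i}^{*})\otimes [y,i_{x_{i}}(z)],
\end{equation*}
so the whole problem reduces to showing these two sums cancel.

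The heart of the argument is the commutator identity $[y,i_{x}(z)]=i_{[y,x]}(z)$ for $z\in Z(\frg)$. To prove it I would establish, on any product of elements of $\frg$, the derivation identity $[\ad(y),i_{x}]=i_{[y,x]}$, i.e.
\begin{equation*}
[y,i_{x}(u_{1}\cdots u_{k})]=i_{[y,x]}(u_{1}\cdots u_{k})+i_{x}([y,u_{1}\cdots u_{k}]).
\end{equation*}
Both sides are derivations in $u_{1}\cdots u_{k}$, so it suffices to check on $k=1$, where the identity $B([y,x],u)=-B(x,[y,u])$ (i.e.\ $\ad$-invariance of the Killing form) gives it immediately. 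Specializing to $z\in Z(\frg)$ kills the second term since $[y,z]=0$.

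With this in hand, I would expand $\ad(y)x_{j}=\sum_{k}A_{kj}(y)\,x_{k}$; the invariance of $B$ then forces $\ad(y)x_{i}^{*}=-\sum_{k}A_{ik}(y)\,x_{k}^{*}$. Substituting these expansions, the two sums above become
\begin{equation*}
-\sum_{i,k}A_{ik}(y)\,\pi_{\lambda}(x_{k}^{*})\otimes i_{x_{i}}(z)+\sum_{i,j}A_{ji}(y)\,\pi_{\lambda}(x_{i}^{*})\otimes i_{x_{j}}(z),
\end{equation*}
and a single relabeling of summation indices shows these are negatives of one another. Conceptually this is just the $\ad$-invariance of the canonical tensor $\sum_{i}x_{i}^{*}\otimes x_{i}\in\frg\otimes\frg$ transported through $\pi_{\lambda}$ in the first factor and $i_{(\cdot)}(z)$ in the second. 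The main delicate point is checking that $i_{x}$, defined by the Leibniz rule on letters of $\frg$, descends unambiguously to $U(\frg)$; I would handle this by interpreting $i_{x}$ via the symmetrization isomorphism $\theta\colon S(\frg)\to U(\frg)$, after which the derivation identity above is pure symmetric-algebra bookkeeping and transfers along $\theta$ compatibly with the $\frg$-module structure.
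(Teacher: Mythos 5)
Your argument is correct and follows essentially the same route as the paper: both reduce the claim to the identity $[\ad(y),i_{x}]=i_{[y,x]}$ (the paper's $[\ad x_i,i_{x_j}]=\sum_s c_{i,j}^s i_{x_s}$), derived from the $\ad$-invariance of the Killing form, and then cancel the two Leibniz terms using the invariance of $\sum_i x_i^*\otimes x_i$ (the paper's relation $c_{i,j}^k+d_{i,k}^j=0$). Your extra care about transporting $i_x$ through the symmetrization map $\theta$ to make it well defined on $U(\frg)$ is a legitimate point the paper passes over silently, but it does not change the substance of the proof.
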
 
	
	\begin{proof}
		From the ad-invariance of the Killing form, i.e. $$B([x_i, x_j], x_k^{*})+B(x_j, [x_i,x_k^{*}])=0$$ 
		for $1 \leq i, j, k \leq m$, 
		we know that $c_{i,j}^k+d_{i,k}^{j}=0$.
		It is straightforward to check 
		$$[adx_i,i_{x_j}]=\sum\limits_{s=1}^{m}c_{i,j}^{s}i_{x_s}, \ \ i,j =1,\cdots , m.$$
		Then for any  $z\in Z(\frg)$, the equation
		$$x_j. M_{z}(\lambda)=\sum\limits_{i=1}^m [\pi_{\lambda}(x_j),
		\pi_{\lambda}(x_i^{*})]\otimes i_{x_i}(z)+\pi_{\lambda}(x_i^{*})\otimes [x_j, i_{x_i}(z)]=0
		$$implies that $M_{z}(\lambda)\in  R_{\lambda}(\frg). $
	\end{proof}

	\section{Schur polynomials and  $R_{\omega_{k}}(\frh)$}
	
	Assume $\{\omega_k \ | \ k=1,\cdots, n-1\}$ is a set of 
	the fundamental dominant weights for  the special linear simple Lie algebra $\mbox{sl}_{n}(\mathbb{C}) $, which are minuscule  in the sense that the Weyl group action of $\mbox{sl}_{n}(\mathbb{C}) $ on the weight set of $V_{\omega_k}$ is transitive. Since $\omega_k$ is minuscule, it follows from Proposition 3.7 that 
	$$\psi_{\omega_k}:R_{\omega_k}(\frg)\rightarrow R_{\omega_k}(\frh)$$ is a  $J$-module isomorphism. 
	This  Section is  devoted to studying the  Weyl group module  $R_{\omega_{k}}(\frh)$.

	Let $W(A_{n-1})$ be the Weyl group  of $\mbox{sl}_{n}(\mathbb{C})$, which is identified with the symmetric group $S_n$ acting on certain inner product space with dimension $n-1$. The invariant ring $U(\frh)^{W(A_{n-1})}$ is generated by $n-1$ algebraically independent elements (2.4.1.1 in [SYS]). To construct the free basis for $R_{\omega_{k}}(\frh)$, we shall require the following results about symmetric polynomials. 
	
	The symmetric group $S_{n}$ acts on the  polynomial ring $\mathbb{C}[x_{1}, \dots, x_{n}]$ by permuting the variables and the invariant set  $\mathbb{C}[x_{1}, \dots, x_{n}]^{S_{n}}$ consisting of  symmetric polynomials forms a graded subring. There are many natural choices of bases for this ring.

	For any $n, t \in \mathbb{N}$ and $n$ algebraically independent variables $x_{1}, \dots, x_{n}$, the $t$-th power sum is defined by
	$$
	\frp_{t}(x_{1}, \dots, x_{n}) = \sum_{i = 1}^{n} x_{i}^{t}.
	$$

	Suppose $\alpha=(\alpha_{1}> \alpha_{2}> \cdots>\alpha_n \geq 0)$ is a partition of $
	|\alpha|=\sum\limits_{i=1}^{n}\alpha_i$. 
	Write $\alpha=\lambda+\delta,$
	where $\ \delta=(n-1,n-2, \cdots, 1,0)$ and $ \lambda=(\lambda_{1},\cdots,\lambda_{n})$ are two partitions.
	The nonzero $\lambda_i$ in $(\lambda_{1},\cdots,\lambda_{n})$ are called the parts of $\lambda$, and the number of parts is  the length of $\lambda$, denoted by $l(\lambda)$.
	Consider $A_{\alpha}$ obtained by antisymmetrizing the monomial $X^{\alpha}=x_1^{\alpha_{1}}\cdots x_n^{\alpha_{n}}$, i. e. 
	$$A_{\alpha}=\sum\limits_{\sigma\in S_n}\mbox{sgn}(\sigma)\sigma(X^{\alpha}), 
	\  \sigma(X^{\alpha})=x_1^{\alpha_{\sigma(1)}}\cdots x_n^{\alpha_{\sigma(n)}}.$$
	Then the symmetric polynomial $$\frs_{\lambda}(x_1,\cdots,x_n)=A_{\alpha}A_{\delta}^{-1}=A_{\lambda+\delta}A_{\delta}^{-1}$$is called the
	Schur polynomial in the variables $x_1, \cdots , x_n$ corresponding to the partition $\lambda$ with $l(\lambda)\leq n$.

	Each Schur polynomial can be expressed as a polynomial in the complete symmetric functions $h_r(x_1, \cdots , x_n)=\frs_{(r)}$ by the formula:
	$$\frs_{\lambda}(x_1,\cdots,x_n)=\mbox{det}(h_{\lambda_{i}-i+j}(x_1,\cdots,x_n))_{1\leq i, j \leq n }$$which is often referred to as the Jacobi-Trudi identity [M].

	Given a partition $\lambda$, the Young diagram of $\lambda$ consists of $l(\lambda)$ rows of adjacent squares: the i-th row has $\lambda_i$ squares, $i=1, \cdots, l(\lambda)$. The transpose of the Young diagram  $\lambda$, denoted by $\lambda'$, is the Young diagram obtained by transposing the columns and rows of $\lambda$.
	
	Set $${P}(t,n)=\{\lambda=(\lambda_{1}\geq \cdots \geq \lambda_{n}\geq 0) \  | \  l(\lambda)\leq n, |\lambda|=t \}$$It is known that the following two sets are the bases over $\mathbb{C}$  of the homogeneous symmetric polynomials with degree $t$ in $n$ variables [F]: 
	\begin{equation}
		\{\frs_{\lambda}(x_1,\cdots,x_n) \ | \ \lambda \in  {P}(t,n)\};
	\end{equation}
	\begin{equation}
		\{\frp_{\lambda'}(x_1,\cdots,x_n) =\prod_{i=1}^{t}\frp_{\lambda_{i}'}(x_1,\cdots,x_n)
		\ | \ \lambda \in  {P}(t,n), \lambda'=(\lambda_1',\cdots, \lambda_{t}')\}.
	\end{equation}
	For  $1 \leq k \leq n-k$, denote the set $$\sum_{k\times (n-k)}=\{ \lambda \ | \ l(\lambda) \leq k, \lambda_1 \leq n-k\}.$$
	The following theorem is due to [T]:
	\begin{thm}
		The 	
		Schur polynomial $$\frs_{\lambda}(x_{1}, \dots, x_{k}) \notin (\frp_{1}(x_1,\cdots,x_n), \dots, \frp_{n}(x_1,\cdots,x_n)) \cdot \mathbb{C}[x_{1}, \dots, x_{n}]$$ if and only if $\lambda \in \sum_{k\times (n-k)}$.
	\end{thm}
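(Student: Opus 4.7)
My approach is to reinterpret the quotient ring $R := \mathbb{C}[x_{1},\ldots,x_{n}]/(\frp_{1},\ldots,\frp_{n})$ geometrically as the cohomology of the complete flag variety, locate the relevant Schur polynomials inside the cohomology of the Grassmannian $\mathrm{Gr}^{k}(\mathbb{C}^{n})$, and then read off both implications from Schubert calculus. First, Newton's identities over $\mathbb{C}$ recursively express each elementary symmetric polynomial $e_{j}$ as a polynomial in $\frp_{1},\ldots,\frp_{j}$ and vice versa, so the ideal $(\frp_{1},\ldots,\frp_{n})$ coincides with the ideal generated by all positive-degree $S_{n}$-symmetric polynomials. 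Hence $R$ is the $S_{n}$-coinvariant algebra, isomorphic by Borel's theorem to $H^{\ast}(\mathrm{GL}_{n}/B;\mathbb{C})$, and the canonical projection $\mathrm{GL}_{n}/B\to\mathrm{Gr}^{k}(\mathbb{C}^{n})$ induces an injection $H^{\ast}(\mathrm{Gr}^{k}(\mathbb{C}^{n});\mathbb{C})\hookrightarrow R$ whose image is the subring $R^{S_{k}\times S_{n-k}}$.

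For the ``if'' direction, the polynomial $\frs_{\lambda}(x_{1},\ldots,x_{k})$ is invariant under $S_{k}\times S_{n-k}$ and therefore lies in $R^{S_{k}\times S_{n-k}}$. The classical Schubert basis theorem identifies $\{\frs_{\lambda}(x_{1},\ldots,x_{k}):\lambda\in\sum_{k\times(n-k)}\}$ with the Schubert basis of $H^{\ast}(\mathrm{Gr}^{k}(\mathbb{C}^{n});\mathbb{C})$, so each such Schur polynomial represents a nonzero class in $R$ and hence does not lie in $(\frp_{1},\ldots,\frp_{n})$. For the ``only if'' direction, if $l(\lambda)>k$ then $\frs_{\lambda}(x_{1},\ldots,x_{k})=0$ already in $\mathbb{C}[x_{1},\ldots,x_{k}]$, so assume $l(\lambda)\le k$ and $\lambda_{1}>n-k$. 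Splitting $\prod_{i=1}^{n}(1+x_{i}t)\equiv 1\pmod{(\frp_{1},\ldots,\frp_{n})}$ as $\prod_{i=1}^{k}(1+x_{i}t)\cdot\prod_{i=k+1}^{n}(1+x_{i}t)\equiv 1$ and comparing with the defining identity $\sum_{j\ge 0}h_{j}(x_{1},\ldots,x_{k})t^{j}=\prod_{i=1}^{k}(1-x_{i}t)^{-1}$ yields
\[
h_{j}(x_{1},\ldots,x_{k})\equiv(-1)^{j}e_{j}(x_{k+1},\ldots,x_{n})\pmod{(\frp_{1},\ldots,\frp_{n})},
\]
which vanishes whenever $j>n-k$ because there are only $n-k$ variables in the second factor. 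The $k\times k$ Jacobi-Trudi formula
\[
\frs_{\lambda}(x_{1},\ldots,x_{k})=\det\bigl(h_{\lambda_{i}-i+j}(x_{1},\ldots,x_{k})\bigr)_{1\le i,j\le k}
\]
then has an entire first row $h_{\lambda_{1}},h_{\lambda_{1}+1},\ldots,h_{\lambda_{1}+k-1}$ of indices exceeding $n-k$, so expanding along that row places $\frs_{\lambda}(x_{1},\ldots,x_{k})$ in the ideal.

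The main obstacle is the Schubert-basis input used for the ``if'' direction: one would ideally prefer a direct algebraic certificate for the $\mathbb{C}$-linear independence of $\{\frs_{\lambda}(x_{1},\ldots,x_{k}):\lambda\in\sum_{k\times(n-k)}\}$ modulo $(\frp_{1},\ldots,\frp_{n})$. A self-contained replacement is to pair each $\frs_{\lambda}$ against the complementary Schur polynomial $\frs_{\lambda^{\vee}}(x_{1},\ldots,x_{k})$, where $\lambda^{\vee}$ is the complement of $\lambda$ in the $k\times(n-k)$ rectangle, via the nondegenerate top-degree pairing on the coinvariant algebra; together with the cardinality identity $|\sum_{k\times(n-k)}|=\binom{n}{k}=\dim_{\mathbb{C}} R^{S_{k}\times S_{n-k}}$, this forces the proposed family to be a basis and closes the argument without any appeal to topology.
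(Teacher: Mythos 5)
Your argument is correct in substance, but it is worth noting that the paper does not actually prove this statement: Theorem 4.3 is quoted as a black box from Tanisaki [T], whose result sits in the more general context of defining ideals of nilpotent orbit closures and Springer fibers. What you supply is therefore a genuine proof where the paper has only a citation, and your route --- identifying $\mathbb{C}[x_{1},\dots,x_{n}]/(\frp_{1},\dots,\frp_{n})$ with the coinvariant algebra and locating the $\frs_{\lambda}(x_{1},\dots,x_{k})$ inside the $S_{k}\times S_{n-k}$-invariants --- is exactly the standard Borel-presentation picture that underlies the paper's later identification of $R_{\omega_k,\nu}$ with $H^{*}(\mbox{Gr}^{k}(\mathbb{C}^{n}))$, so it fits the paper's framework well. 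Your ``only if'' direction is complete and fully elementary: the vanishing $\frs_{\lambda}(x_{1},\dots,x_{k})=0$ for $l(\lambda)>k$, the congruence $h_{j}(x_{1},\dots,x_{k})\equiv(-1)^{j}e_{j}(x_{k+1},\dots,x_{n})$ modulo the ideal, and the row expansion of the Jacobi--Trudi determinant are all correct. One small imprecision in the ``if'' direction: the pairing of $\frs_{\lambda}$ with $\frs_{\lambda^{\vee}}$ lands in degree $k(n-k)$, not in the socle degree $\binom{n}{2}$ of the full coinvariant algebra, so it is not literally ``the top-degree pairing on the coinvariant algebra''; you need either the Poincar\'e duality of the subring $R^{S_{k}\times S_{n-k}}$ or, more directly, a certificate that $\frs_{(n-k)^{k}}(x_{1},\dots,x_{k})=(x_{1}\cdots x_{k})^{n-k}$ is nonzero modulo the ideal. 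The latter is immediate from the Artin monomial basis of the coinvariant algebra ($x_{1}^{a_{1}}\cdots x_{n}^{a_{n}}$ with $a_{i}\leq n-i$, and here $a_{i}=n-k\leq n-i$ for $i\leq k$), which would make your argument entirely self-contained.
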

	
	Assume  $\{e_{1},\cdots, e_{n}\}$ is a standard basis of vector space $\mathbb{C}^{n}$.
	The  natural representation $V_{\omega_{1}}=\mathbb{C}^{n}$ is given by
	$$\pi_{\omega_{1}}(X_{ij}) e_{s} = \delta_{js}e_{i}, i \neq j, 1 \leq i, j, s \leq n;$$$$
	\pi_{\omega_{1}}(X_{ii}) e_{s} = (\delta_{is} - \frac{1}{n})e_{s}, \ 1 \leq i, s \leq n.$$For $k=1,\cdots, n-1$, the $k$-th fundamental representation with dimension $d_{\omega_{k}}$ for   $\mbox{sl}_{n}(\mathbb{C})$ is given by 
	$V_{\omega_{k}}=\wedge ^{k}
	V_{\omega_{1}}$, which is a  
	minuscule representation.

	Take an ordered basis $\{e_{i_{1}}\wedge e_{i_2}\wedge \cdots \wedge e_{i_{k-1}}\wedge e_{p} (1\leq i_1 < i_2 < \cdots < i_{k-1} < p \leq n-1),\cdots, 
	e_{j_1}\wedge e_{j_2}\wedge \cdots \wedge e_{j_{k-1}} \wedge e_{n} (1\leq j_1 < j_2 < \cdots < j_{k-1} \leq n-1)
	\}$ with respect to the lexicographical order on $\mathbb{N}^{n}$  for $V_{\omega_{k}}$.

	\begin{prop} Let  $1 \leq k \leq n-k$. Under the above ordered basis, the matrices  $$\mbox{diag}\{\frs_{\lambda}(X_{11},\cdots,
		X_{kk} 
		),\cdots,
		\frs_{\lambda}(X_{i_1,i_1},\cdots,
		X_{i_{k-1},i_{k-1}},X_{pp}
		), \cdots,$$$$	\frs_{\lambda}(X_{j_1,j_1},\cdots,X_{j_{k-1},j_{k-1}},0), \cdots, \frs_{\lambda}(X_{n-k-1,n-k-1},\cdots,
		X_{n-1,n-1},0)  \}$$ where  $\lambda$ varies in $\sum_{k\times ( n-k)}$, form  a free basis for  $J$-module 
		$	R_{\omega_{k}}(\mathfrak{h}) $.
	\end{prop}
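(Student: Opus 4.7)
My plan is to identify $R_{\omega_{k}}(\frh)$ with a classical ring of relative invariants and then to import the Schur-polynomial basis describing the cohomology of the Grassmannian.

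Since $\omega_{k}$ is minuscule, the weights of $V_{\omega_{k}}=\wedge^{k}V_{\omega_{1}}$ are in bijection with $k$-subsets $I=\{i_{1}<\cdots<i_{k}\}\subseteq\{1,\ldots,n\}$, each appearing with multiplicity one; hence $\End_{\frh}V_{\omega_{k}}$ is the diagonal algebra spanned by the weight-space projectors $d_{I}$, and the Weyl group $W=S_{n}$ acts transitively by $\sigma\cdot d_{I}=d_{\sigma(I)}$ with stabilizer of the base subset $I_{0}=\{1,\ldots,k\}$ equal to $S_{k}\times S_{n-k}$. Evaluating a $W$-invariant element $\sum_{I}d_{I}\otimes u_{I}$ at the slot $I_{0}$ yields an isomorphism of $J$-modules
\[
R_{\omega_{k}}(\frh)\;\xrightarrow{\sim}\;U(\frh)^{S_{k}\times S_{n-k}},
\]
whose inverse sends $f$ to the diagonal matrix whose $I$-entry is $\sigma_{I}\cdot f$ for any $\sigma_{I}\in S_{n}$ with $\sigma_{I}(I_{0})=I$.

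Next I identify $U(\frh)$ with $\bbC[x_{1},\ldots,x_{n}]/(x_{1}+\cdots+x_{n})$ via $x_{i}\leftrightarrow X_{ii}$, so that $J$ becomes the subring of $S_{n}$-invariants. The claim is then equivalent to the assertion that $\{\frs_{\lambda}(x_{1},\ldots,x_{k})\mid\lambda\in\sum_{k\times(n-k)}\}$ is a free $J$-basis of $U(\frh)^{S_{k}\times S_{n-k}}$. By the graded Nakayama lemma it suffices to check that these polynomials descend to a $\bbC$-basis of the coinvariant quotient
\[
Q\;=\;\bbC[x_{1},\ldots,x_{n}]^{S_{k}\times S_{n-k}}\,\bigl/\,(\frp_{1},\ldots,\frp_{n})\cdot\bbC[x_{1},\ldots,x_{n}]^{S_{k}\times S_{n-k}}.
\]
Linear independence in $Q$ is precisely the content of Theorem~4.1, and $|\sum_{k\times(n-k)}|=\binom{n}{k}$ matches both the rank forced by Proposition~3.7 (since $\dim V_{\omega_{k}}=\binom{n}{k}$) and the total dimension of $Q$; the latter is a standard Hilbert-series computation giving the Gaussian binomial $\binom{n}{k}_{q}$, specializing to $\binom{n}{k}$. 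An independent family of the correct cardinality is therefore a basis of $Q$, and graded Nakayama lifts it to the claimed free basis of $U(\frh)^{S_{k}\times S_{n-k}}$.

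Transporting along the first isomorphism, the class of $\frs_{\lambda}(x_{1},\ldots,x_{k})$ is sent to the diagonal matrix with $I$-entry equal to $\frs_{\lambda}(X_{i_{1}i_{1}},\ldots,X_{i_{k}i_{k}})$; when $n\in I$ the Cartan relation $X_{nn}=-\sum_{i<n}X_{ii}$ accounts for the entries written with a $0$ in the displayed formula. This reproduces the stated free basis. The main obstacle is the spanning half of the Nakayama step, since Theorem~4.1 already hands over independence; the remaining task is to pin down the Hilbert series of $Q$, which is nothing other than the Poincar\'e polynomial of $H^{*}(\mathrm{Gr}^{k}(\bbC^{n}))$, namely the Gaussian binomial coefficient, and it is exactly the Schubert-cell degree count that underlies the Grassmannian-cohomology interpretation advertised in the introduction.
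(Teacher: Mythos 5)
Your argument is correct, and it reaches the conclusion by a genuinely different route from the paper. You first identify $R_{\omega_k}(\frh)=(\End_\frh V_{\omega_k}\otimes U(\frh))^{S_n}$ with the parabolic invariant ring $U(\frh)^{S_k\times S_{n-k}}$ by evaluating at the base weight projector (Frobenius reciprocity for the transitive $S_n$-action on the $k$-subsets), and then run a graded Nakayama argument in the parabolic coinvariant algebra $Q$, whose dimension $\binom{n}{k}$ you read off from the Gaussian-binomial Hilbert series. The paper instead decomposes $\End V_{\omega_k}\simeq\bigoplus_{i\le k}V_{\omega_i+\omega_{n-i}}$, invokes smallness of each constituent and Kostant--Broer covariant theory to get $R_{\omega_k}(\frh)=(f_1,\dots,f_{n-1})R_{\omega_k}(\frh)\oplus\sum_i H^0_{\omega_i+\omega_{n-i}}$, and then exhibits the Schur-diagonal matrices as polynomials in the restrictions $\psi_{\omega_k}(M_{\mathfrak{C}_s}(\omega_k))$ of the M-type elements, which are diagonal power-sum matrices. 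Your route is more self-contained on the $\frh$-side and makes the rank count transparent; the paper's route produces the harmonic spaces $H^0_{\omega_i+\omega_{n-i}}$ explicitly, which it reuses in Corollary 5.3, and exhibits the basis as polynomials in concrete generators, which feeds into Theorem 5.4. Both arguments ultimately rest on the same key input, Tanisaki's theorem (Theorem 4.3), and both use it in a slightly stronger form than literally stated there: the theorem asserts non-membership of each individual $\frs_\lambda(x_1,\dots,x_k)$ in $(\frp_1,\dots,\frp_n)\bbC[x_1,\dots,x_n]$, whereas what is needed is linear independence of the whole family modulo that ideal; this follows from Tanisaki's full result (the quotient is $H^*(\mbox{Gr}^k(\bbC^n))$ with the $\frs_\lambda$, $\lambda\in\sum_{k\times(n-k)}$, as a basis), and since the paper makes the identical leap I do not count it against you. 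One small caution: your closing remark that the Cartan relation $X_{nn}=-\sum_{i<n}X_{ii}$ ``accounts for'' the entries written with a $0$ is not quite a substitution of equals, since $X_{nn}\neq 0$ in $U(\frh)$; the $0$ in the statement reflects the paper's choice of basis $\{X_{11},\dots,X_{n-1,n-1}\}$ in computing $\psi_{\omega_k}(M_{\mathfrak{C}_s}(\omega_k))$, and the paper itself is inconsistent on this point (compare the display in the proof of Theorem 5.4, where $X_{nn}$ appears in those slots), so this is a defect of the statement rather than of your proof.
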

	\begin{proof}
		
		Note  that
		$\End V_{\omega_{k}} \simeq V_{\omega_{n-k}} \otimes V_{\omega_{k}} \simeq \sum_{i = 1}^{k} V_{\omega_{i} + \omega_{n-i}}$ as  $\mbox{sl}_{n}(\mathbb{C})$-modules.
		Since $ V_{\omega_{k}}$ is minuscule, each constituent of 	
		$\End V_{\omega_{k}}$ is small by Theorem 1.1 of [R] and Proposition 3.3.
		It was observed by Kostant [Ko3] (and probably by many others as well) that the zero weight space $V_{\omega_{i} + \omega_{n-i}}^{0}$ of small module
		$V_{\omega_{i} + \omega_{n-i}}$ is simple as a $S_n$-module.
		Therefore,		
		$$	R_{\omega_{k}}(\mathfrak{h}) =  (\End_{\frh}V_{\omega_k}\otimes U(\frh))^{W(A_{n-1})}=
		(\End_{\frh}V_{\omega_k}\otimes U(\frh))^{S_n}$$$$=
		(f_{1} \dots,f_{n-1})R_{\omega_{k}}(\mathfrak{h}) \oplus \sum_{i = 1}^{k}H_{\omega_{i} + \omega_{n-i}}^{0}, $$	
		where $\{f_1, \cdots, f_{n-1}\}$ is any basic generator set for $J$ and  $H_{\omega_{i} + \omega_{n-i}}^{0}$ is a graded $m_{\omega_{i} + \omega_{n-i}}^{0}$-dimensional linear space over $\mathbb{C}$.

		From Equation (4.1), (4.2) and 
		$$
		\psi_{\omega_{k}}(M_{\mathfrak{C}_{s}}(\omega_{k}))	= \psi_{\omega_{k}}(\sum\limits_{1\leq i\neq j\leq n}\pi_{\omega_k}(X_{ij})\otimes 
		i_{X_{ij}^{*}}(\mathfrak{C}_s))+ \psi_{\omega_{k}}(\sum\limits_{i=1}^{n-1}\pi_{\omega_k}(X_{ii})\otimes 
		i_{X_{ii}^{*}}(\mathfrak{C}_s))$$$$=	
		\sum_{i=1}^{n-1}s\pi_{\omega_k}(X_{ii}) \otimes X_{ii}^{s-1}
		=s\mbox{diag}\{\frp_{s-1}(X_{11},\cdots,
		X_{kk} 
		),\cdots,$$$$
		\frp_{s-1}(X_{i_1,i_1},\cdots,
		X_{i_{k-1},i_{k-1}},X_{pp}
		),\cdots,\frp_{s-1}(X_{j_1,j_1},\cdots,X_{j_{k-1},j_{k-1}},0),$$$$
		\cdots,\frp_{s-1}(X_{n-k-1,n-k-1},\cdots,
		X_{n-1,n-1},0)
		\}-s\frac{k}{n}.\frp_{s-1}(X_{11},\cdots,
		X_{nn} 
		)I_{d_{\omega_k}} \in 	R_{\omega_{k}}(\mathfrak{h})
		$$to obtain such diagonal matrices
		$$\mbox{diag}\{\frs_{\lambda}(X_{11},\cdots,
		X_{kk} 
		),\cdots,
		\frs_{\lambda}(X_{i_1,i_1},\cdots,
		X_{i_{k-1},i_{k-1}},X_{pp}
		), \cdots,$$$$	\frs_{\lambda}(X_{j_1,j_1},\cdots,X_{j_{k-1},j_{k-1}},0), \cdots, \frs_{\lambda}(X_{n-k-1,n-k-1},\cdots,
		X_{n-1,n-1},0)  \} \in 	R_{\omega_{k}}(\mathfrak{h}). $$	
		Furthermore, these matrices are  contained in $ \sum_{i = 1}^{k} H_{\omega_{i} + \omega_{n-i}}^{0} $
		if and only if $\lambda \in \sum_{k\times (n-k)}$ by Theorem 4.3. Note that the dimension of $\sum_{i = 1}^{k}H_{\omega_{i} + \omega_{n-i}}^{0}$ over $\mathbb{C}$ is the rank of $J$-module $R_{\omega_{k}}(\mathfrak{h})$, which is  equal to $\dim \End_{\mathfrak{h}} V_{\omega_{k}} = \binom{n}{k}$. 
		The number of the set $\sum_{k\times (n-k)}$ is also equal to $\binom{n}{k}$. Thus the above  diagonal linearly independent matrices form a $\mathbb{C}$-basis of the vector space $\sum_{i = 1}^{k} H_{\omega_{i} + \omega_{n-i}}^{0}$ of dimension $\binom{n}{k}$. It is also known that the $\mathbb{C}$-basis for the vector space $ \sum_{i = 1}^{k} H_{\omega_{i} + \omega_{n-i}}^{0}$ is a  free basis for $J$-module
		$	R_{\omega_{k}}(\mathfrak{h}) $. Therefore, the Proposition 4.4 follows.
	\end{proof}

	\section{Basis Structures for  $R_{\omega_k}(\mbox{sl}_{n}(\mathbb{C}))$ and 
		$R_{\omega_k,\nu}(\mbox{sl}_{n}(\mathbb{C}))$ }

	In this section, we will construct the basis for  
	$R_{\omega_{k}}(\mbox{sl}_{n}(\mathbb{C}))$ and $R_{\omega_{k},\nu}(\mbox{sl}_{n}(\mathbb{C}))$ based on Proposition 3.7 and 3.9.

	Let $\mbox{Gr}^{k}(\mathbb{C}^{n})$
	denote the  Grassmannian variety of subspaces in $\mathbb{C}^{n}$ with dimension $k$. It follows from [F] that the complex coefficient cohomology ring of 
	Grassmannian variety 
	$\mbox{Gr}^{k}(\mathbb{C}^{n})$ is isomorphic to the following associative  algebraic structure  $L(k,n-k)$ given by Theorem 1 in  [Hi] and Theorem 3.1 in [CL], which is stated as follows:
	
	\begin{thm}Let $1 \leq k \leq n-k$. Suppose that $L(k,n-k)$ denotes the graded $\mathbb{C}$-algebra $\mathbb{C}[w_1,\cdots,w_k,z_1, \cdots, z_{n-k}]/I(k,n-k)$, where
		$\mbox{deg}(w_i)=i=\mbox{deg}(z_i)$ and 	
		$I(k,n-k)$ is generated by the single non-homogeneous relation
		$$(1+w_1+\cdots +w_k)(1+z_1+\cdots+z_{n-k})=1.$$Then $L(k,n-k)$ is isomorphic to 
		$$\mathbb{C}[w_1,\cdots,w_k]/J(k,n-k),$$ where $J(k,n-k)$ is generated by	 
		$k$ homogeneous relations $f_{1,n-k},\cdots,f_{k,n-k}$ given  by 
		\begin{equation}
			f_{s,n-k}=\sum_{\vec{t} = (t_{1}, t_{2}, ..., t_{k})}\frac{t_s+\cdots +t_{k}}{t_1+\cdots +t_k}\binom{t_1+\cdots +t_k}{t_1,\cdots,t_k}w_1^{t_1}\cdots w_k^{t_k}.
		\end{equation}
	    The summation is over non-negative integers satisfying $t_1+2t_2+\cdots+kt_k=n-k+s$,
		$ s=1, \cdots k$.
	\end{thm}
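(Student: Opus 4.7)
The plan is to eliminate the $z$-variables from the presentation of $L(k,n-k)$ and to show that the surviving $k$ relations in the $w$-variables coincide, after the uniform sign change $w_i\mapsto -w_i$, with $f_{1,n-k},\dots,f_{k,n-k}$.

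Write $W(u)=1+w_1u+\cdots+w_ku^k$ and $Z(u)=1+z_1u+\cdots+z_{n-k}u^{n-k}$. Reading the relation $(1+W)(1+Z)=1$ in graded pieces gives
\[
w_d+z_d+\sum_{i=1}^{d-1}w_iz_{d-i}=0,\qquad d=1,\dots,n,
\]
with the conventions $w_i=0$ for $i>k$ and $z_j=0$ for $j>n-k$. For $d=1,\dots,n-k$ these equations recursively express $z_d$ as a polynomial in $w_1,\dots,w_k$; equivalently $\sum_{d\ge 0}z_du^d$ is the degree-$(n-k)$ truncation of $1/W(u)$. Under the hypothesis $k\le n-k$, every remaining degree $d=n-k+s$ with $s=1,\dots,k$ satisfies $d>k$, so $w_d=0$, and after substituting the formulas for $z_d$ the residual equation becomes
\[
r_s\;:=\;[u^{n-k+s}]\,\frac{w_su^s+w_{s+1}u^{s+1}+\cdots+w_ku^k}{W(u)}\;=\;0.
\]

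To match the explicit shape of $f_{s,n-k}$, I expand $1/W(u)=\sum_{m\ge 0}(-1)^m Q(u)^m$ with $Q(u)=w_1u+\cdots+w_ku^k$, apply the multinomial theorem to each $Q(u)^m$, and collect the coefficient of $u^{n-k+s}$. The key combinatorial identity
\[
\binom{m}{t_1,\dots,t_j-1,\dots,t_k}=\frac{t_j}{m+1}\binom{m+1}{t_1,\dots,t_k}
\]
lets the numerator factor $w_j$ (for $j\ge s$) be absorbed into a shifted multi-index; summing $j$ from $s$ to $k$ then produces the factor $(t_s+\cdots+t_k)/|t|$ via an Euler-type telescoping, yielding
\[
r_s=\sum_{\sum it_i=n-k+s}(-1)^{|t|-1}\,\frac{t_s+\cdots+t_k}{|t|}\binom{|t|}{t_1,\dots,t_k}w_1^{t_1}\cdots w_k^{t_k}.
\]
A direct check shows $r_s(-w_1,\dots,-w_k)=-f_{s,n-k}(w_1,\dots,w_k)$, so the automorphism $w_i\mapsto -w_i$ of $\mathbb{C}[w_1,\dots,w_k]$ carries $\langle r_1,\dots,r_k\rangle$ onto $J(k,n-k)$, and hence $L(k,n-k)\cong\mathbb{C}[w_1,\dots,w_k]/J(k,n-k)$.

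The principal obstacle is the combinatorial step that produces the weighting $(t_s+\cdots+t_k)/|t|$: a naive multinomial expansion of $r_s$ only yields monomials with the coefficient $(-1)^{|t|}\binom{|t|}{t_1,\dots,t_k}$, and one must carry out the reindexing and telescoping above to recognize the theorem's weighted form. Once this identity is in hand, the remaining steps are routine manipulations of graded pieces and formal-power-series inverses.
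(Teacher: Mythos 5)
Your proof is correct. Note that the paper itself offers no proof of this statement: it is quoted from Hiller and from Chen--Louck (Theorem~1 of [Hi], Theorem~3.1 of [CL]), so there is no in-paper argument to compare against; your write-up in effect supplies the missing verification. The elimination of the $z$-variables via the degree-$\le n-k$ graded pieces, the identification of the residual degree-$(n-k+s)$ relation with $r_s=[u^{n-k+s}]\bigl((w_su^s+\cdots+w_ku^k)/W(u)\bigr)$, and the multinomial reindexing
$\binom{|t|-1}{t_1,\dots,t_j-1,\dots,t_k}=\tfrac{t_j}{|t|}\binom{|t|}{t_1,\dots,t_k}$ summed over $j=s,\dots,k$ all check out, and they correctly produce the weight $(t_s+\cdots+t_k)/|t|$ with the extra sign $(-1)^{|t|-1}$. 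You were right to notice that this sign is not constant on the monomials of $r_s$, so $r_s\neq\pm f_{s,n-k}$, and your resolution --- the graded automorphism $w_i\mapsto -w_i$, under which $r_s(-w)=-f_{s,n-k}(w)$ --- is exactly what is needed, since the theorem only asserts an isomorphism of graded algebras rather than one fixing the generators $w_i$ (the discrepancy reflects a sign convention in the cited sources, e.g.\ writing the defining relation with $1-w_1-\cdots-w_k$). The one point worth stating explicitly if you polish this is that $I(k,n-k)$, being the kernel of a map onto a graded algebra, is generated by the homogeneous components $g_d=\sum_{i+j=d}w_iz_j$ for $d=1,\dots,n$, and that quotienting by $g_1,\dots,g_{n-k}$ is the same as substituting $z_d\mapsto\zeta_d$; this is the routine elimination step you allude to and it is unproblematic.
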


	\begin{cor}	Assume $\omega_{k}$ is totally subordinate to $\nu \in \mathcal{P}_{+}$. Then the following associative  algebraic  structures are  isomorphic:$$	R_{\omega_{k}}(\mathfrak{g})/
		(f_{1} \dots,f_{n-1})R_{\omega_{k}}(\mathfrak{g})\simeq	R_{\omega_{k}}(\mathfrak{h})/
		(f_{1} \dots,f_{n-1})R_{\omega_{k}}(\mathfrak{h})\simeq R_{\omega_k,\nu}(\frg)\simeq L(k,n-k), $$
		where $\{f_1, \cdots, f_{n-1}\}$ is any basic generator set for $J$.
		Besides, they  are all isomorphic to the complex coefficient cohomology ring of 
		Grassmannian variety 
		$\mbox{Gr}^{k}(\mathbb{C}^{n})$.
	\end{cor}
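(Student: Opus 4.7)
The plan is to break the claimed chain of isomorphisms into three pairwise identifications and then invoke Theorem 5.1 (with [F]) for the last link to the Grassmannian cohomology. The three pairwise pieces are
\[
R_{\omega_{k}}(\frg)/(f_1,\dots,f_{n-1})R_{\omega_{k}}(\frg)\;\simeq\;R_{\omega_{k}}(\frh)/(f_1,\dots,f_{n-1})R_{\omega_{k}}(\frh)\;\simeq\;R_{\omega_{k},\nu}(\frg)\;\simeq\;L(k,n-k).
\]
For the first, I would run through Proposition 3.7: since $\omega_k$ is minuscule, $\psi_{\omega_k}$ is a $J$-module isomorphism, so it sends $(f_1,\dots,f_{n-1})R_{\omega_{k}}(\frg)$ onto $(f_1,\dots,f_{n-1})R_{\omega_{k}}(\frh)$ and descends to an isomorphism of quotient vector spaces. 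To promote this to an algebra isomorphism I would invoke the PBW filtration: the only non-multiplicative ingredient in $\psi_{\omega_k}$ is the symmetrization map $\theta$, and its commutator corrections drop filtration degree, so they vanish when one passes to the quotient by the positively graded ideal $(f_1,\dots,f_{n-1})$.

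For the identification with $R_{\omega_{k},\nu}(\frg)$, I would use $\gamma_\nu$ of Proposition 3.9, which is a surjective algebra map and sends each $z\in Z(\frg)$ to the scalar $\chi_\nu(z)\cdot\mathrm{id}$. Replacing $f_i$ by $f_i-\chi_\nu(f_i)$ (which generates the same coset-ideal and does not change the quotient algebra up to canonical isomorphism), $\gamma_\nu$ kills $(f_1,\dots,f_{n-1})R_{\omega_{k}}(\frg)$ and descends to a surjective $\bb C$-algebra map from the quotient to $R_{\omega_{k},\nu}(\frg)$; Proposition 3.9 gives $\dim_{\bb C}R_{\omega_{k},\nu}(\frg)=\binom{n}{k}$, equal to the $\bb C$-dimension of the quotient, so this surjection is an isomorphism. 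For the identification with $L(k,n-k)$, I would use the explicit Schur-polynomial basis from Proposition 4.4 and define a map $L(k,n-k)\to R_{\omega_{k}}(\frh)/(f_1,\dots,f_{n-1})R_{\omega_{k}}(\frh)$ sending $w_i$ to the diagonal matrix whose blocks are the complete symmetric polynomials $h_i$ in the corresponding diagonal $X_{jj}$'s. Surjectivity follows from the Jacobi--Trudi identity combined with Proposition 4.4, while well-definedness reduces, via the Newton--Girard identities and Theorem 4.3, to the fact that Schur polynomials indexed by partitions outside $\sum_{k\times(n-k)}$ lie in the power-sum ideal $(\frp_1,\dots,\frp_n)$, which cuts out the same ideal as $(f_1,\dots,f_{n-1})$ inside $U(\frh)^W$.

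The main obstacle is the last verification in the third step: one must check that the specific relations $f_{s,n-k}$ of equation (5.1), with their multinomial coefficients and constrained summation $t_1+2t_2+\cdots+kt_k=n-k+s$, coincide exactly with the relations forced on the $h_i$'s when the power sums $\frp_1,\dots,\frp_n$ are set to zero in the restricted polynomial ring acting on each diagonal block of Proposition 4.4. This is a generating-function identity of Newton--Girard type but tailored to the truncation to $k$ variables that varies block by block; it is the technical heart of the argument and must be carried out carefully. The smaller subtlety in Step 1 --- upgrading $\psi_{\omega_k}$ from a $J$-module isomorphism to an algebra isomorphism on the quotient --- is comparatively standard once the filtration argument is set up. With all three pairwise isomorphisms in hand, Theorem 5.1 closes the loop to $H^{*}(\mathrm{Gr}^{k}(\bb C^{n}))$.
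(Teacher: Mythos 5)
Your proposal follows essentially the same route as the paper: the identical chain of identifications via $\psi_{\omega_k}$ (Proposition 3.7), $\gamma_\nu$ and the dimension count of Proposition 3.9, and the Schur-polynomial basis of Proposition 4.4 combined with Theorem 4.3 and Theorem 5.1. You are in fact more careful than the paper at the two points it glosses over --- promoting $\psi_{\omega_k}$ to an algebra isomorphism on the quotient (the paper simply asserts it is an associative algebra homomorphism even though the symmetrization $\theta$ is not multiplicative) and verifying that the relations $f_{s,n-k}$ of equation (5.1) are exactly those satisfied by the generators in the quotient --- so your plan is sound.
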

	\begin{proof}
		It is known that 
		$\mbox{Gr}^{k}(\mathbb{C}^{n})$ is isomorphic to the complex linear space with the basis 
		$$\{\frs_{\lambda}(x_1,\cdots, x_n) \ | \ \lambda \in \sum_{k\times (n-k)}  \}$$ with dimension $\binom{n}{k}$ [F], which is also isomorphic to the associative algebra $L(k,n-k)$ by Theorem 5.1.

		Since $V_{\omega_{k}}$ is minuscule, the restriction map $\psi_{\omega_{k}}$ is an associative algebra homomorphism and also a  $J$-module isomorphism. Therefore, by Kostant's decomposition theorem for the universal enveloping algebra, we have the following associative algebraic isomorphisms: $$R_{\omega_{k}}(\mathfrak{g})/
		(f_{1} \dots,f_{n-1})R_{\omega_{k}}(\mathfrak{g})\simeq 
		R_{\omega_{k}}(\mathfrak{h})/
		(f_{1} \dots,f_{n-1})R_{\omega_{k}}(\mathfrak{h})\simeq		
		\sum_{i = 1}^{k} H_{\omega_{i} + \omega_{n-i}}^{0}.$$	
		From Proposition 4.4, we also know that $\sum_{i = 1}^{k} H_{\omega_{i} + \omega_{n-i}}^{0}$ is isomorphic to the complex linear space with the basis 
		$\{\frs_{\lambda}(x_1,\cdots, x_k) \ | \ \lambda \in \sum_{k\times (n-k)}  \}$. Therefore,
		$$	R_{\omega_{k}}(\mathfrak{g})/
		(f_{1} \dots,f_{n-1})R_{\omega_{k}}(\mathfrak{g})\simeq	R_{\omega_{k}}(\mathfrak{h})/
		(f_{1} \dots,f_{n-1})R_{\omega_{k}}(\mathfrak{h})\simeq L(k,n-k). $$		 
		On the other hand, if $s_i$ is a free basis for $Z(\frg)$-module 
		$R_{\omega_{k}}(\mathfrak{g})$, then $\gamma_{\nu}(s_i)$ is a basis of $R_{\omega_{k},\nu}(\mathfrak{g}) $ as a $\mathbb{C}$-vector space by Proposition 3.9.
		Hence, $R_{\omega_k,\nu}(\frg) \simeq	R_{\omega_{k}}(\mathfrak{g})/
		(f_{1} \dots,f_{n-1})R_{\omega_{k}}(\mathfrak{g})\simeq L(k,n-k)$ as well.
	\end{proof}

	\begin{thm}
		The set of matrices $M_{\omega_{k}}(\mathfrak{C}_{2}),\dots, M_{\omega_{k}}(\mathfrak{C}_{k+1})$
		is a minimal set of generators of  $R_{\omega_{k}}(\mbox{sl}_{n}(\mathbb{C}))$ over $Z(\mbox{sl}_{n}(\mathbb{C}))$.
	\end{thm}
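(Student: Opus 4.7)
The plan is to reduce both the generation and the minimality assertions to the graded finite-dimensional algebra $L(k,n-k)$, and then bootstrap back via a graded Nakayama argument. Combining the $J$-module isomorphism $\psi_{\omega_k}\colon R_{\omega_k}(\mbox{sl}_n(\mathbb{C})) \to R_{\omega_k}(\frh)$ of Proposition 3.7 with Corollary 5.2 yields the algebra isomorphism
\[
R_{\omega_k}(\mbox{sl}_n(\mathbb{C}))/(f_1,\ldots,f_{n-1})R_{\omega_k}(\mbox{sl}_n(\mathbb{C})) \;\simeq\; L(k,n-k) \;\simeq\; \mathbb{C}[w_1,\ldots,w_k]/J(k,n-k),
\]
where $\deg w_i = i$ and $\{f_1,\ldots,f_{n-1}\}$ is any basic generator set of $J = Z(\mbox{sl}_n(\mathbb{C}))$.

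\textbf{Generation.} The explicit calculation performed inside the proof of Proposition 4.4 shows that
\[
\psi_{\omega_k}(M_{\omega_k}(\mathfrak{C}_s)) \;=\; s\,\mathrm{diag}\{\frp_{s-1}(X_{i_1 i_1},\ldots,X_{i_k i_k})\}_{I} \;-\; \tfrac{sk}{n}\,\frp_{s-1}(X_{11},\ldots,X_{nn})\,I_{d_{\omega_k}},
\]
with $I$ running over the $k$-subsets of $\{1,\ldots,n\}$ indexing the diagonal blocks. The second summand lies in $J$ and so vanishes modulo $(f_1,\ldots,f_{n-1})R_{\omega_k}(\frh)$, while the first, read in the Schur-polynomial basis of Proposition 4.4, represents the power sum $\frp_{s-1}$ in $k$ variables. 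As $s$ ranges over $2,\ldots,k+1$ we obtain $\{\frp_1,\frp_2,\ldots,\frp_k\}$ in $L(k,n-k)$; by Newton's identities these power sums algebraically generate $\mathbb{C}[w_1,\ldots,w_k]$ and therefore $L(k,n-k)$ itself. Since $R_{\omega_k}(\mbox{sl}_n(\mathbb{C}))$ is a finitely generated $Z(\mbox{sl}_n(\mathbb{C}))$-module (free of rank $\binom{n}{k}$), a graded Nakayama argument applied to the cokernel of the inclusion of the $Z$-subalgebra spanned by $M_{\omega_k}(\mathfrak{C}_2),\ldots,M_{\omega_k}(\mathfrak{C}_{k+1})$ promotes this to generation over $Z(\mbox{sl}_n(\mathbb{C}))$.

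\textbf{Minimality.} The hypothesis $1 \leq k \leq \lceil n/2\rceil$ gives $k \leq n-k$, so each relation $f_{s,n-k}$ in (5.1) has degree $n-k+s \geq k+1$; moreover every monomial $w_1^{t_1}\cdots w_k^{t_k}$ occurring in $f_{s,n-k}$ has $w$-length $t_1+\cdots+t_k \geq 2$, since a length-one monomial would require a generator $w_{n-k+s}$ with $n-k+s > k$, which is absent. Hence all relations lie in $A_+^2$ for $A = \mathbb{C}[w_1,\ldots,w_k]$, so the indecomposables $L(k,n-k)_+ / L(k,n-k)_+^2$ retain the basis $\{w_1,\ldots,w_k\}$ and have $\mathbb{C}$-dimension exactly $k$. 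For a graded connected $\mathbb{C}$-algebra this integer equals the minimum cardinality of an algebra generating set, so no proper subset of $\{M_{\omega_k}(\mathfrak{C}_s)\}_{s=2}^{k+1}$ can generate $L(k,n-k)$, and a fortiori cannot generate $R_{\omega_k}(\mbox{sl}_n(\mathbb{C}))$ over $Z(\mbox{sl}_n(\mathbb{C}))$.

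\textbf{Main obstacle.} The most delicate step is the graded Nakayama lift. One must equip $R_{\omega_k}(\mbox{sl}_n(\mathbb{C}))$ with a grading, transported from the polynomial model $(\End V_{\omega_k}\otimes \mathscr{P}(\frg))^{\frg}$ through the symmetrization and Killing isomorphisms of Proposition 3.9, and verify that in this grading each $M_{\omega_k}(\mathfrak{C}_s)$ is of pure degree with leading symbol matching the power sum $\frp_{s-1}$ extracted above. The required top-symbol computation amounts to tracking cross-terms in $\delta_{\omega_k}(\mathfrak{C}_s) - \pi_{\omega_k}(\mathfrak{C}_s)\otimes 1 - \mathrm{id}\otimes \mathfrak{C}_s$, which is essentially what Proposition 4.4 already does; once this grading compatibility is secured, Nakayama together with Newton's identities concludes the proof.
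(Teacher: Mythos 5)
Your proposal is correct and follows essentially the same route as the paper: the decisive step in both is the computation that, modulo lower filtration degree, $\psi_{\omega_k}(M_{\omega_k}(\mathfrak{C}_s))$ is the diagonal matrix of power sums $\frp_{s-1}$ in the $k$ relevant Cartan variables, after which generation and minimality are read off from the structure of symmetric polynomials in $k$ variables and transported back through the isomorphism $\psi_{\omega_k}$. Your passage to $L(k,n-k)$ with Newton's identities, the count of indecomposables, and the graded Nakayama lift simply make explicit the generation, minimality, and lifting steps that the paper handles more tersely via Proposition 4.4 and equation (4.2).
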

	\begin{proof}
		For $t \in \mathbb{N}$, assume that $R_{\omega_k}^{t}(\frh)$ is the set of all matrices in $\mathfrak{h}$-endomorphism algebra $R_{\omega_k}(\frh)$ with entries in  $U(\mathfrak{h})^{t}$, which consists of all polynomials with degree less than or equal to $t$.
		From Proposition 3.6, we know that $$
		\psi_{\omega_{k}}(M_{\omega_{k}}(\mathfrak{C}_s))
		=	
		\sum_{i=1}^{n}s\pi_{\omega_k}(X_{ii}) \otimes X_{ii}^{s-1}	\mbox{mod} (R_{\omega_k}^{s-2}(\frh))$$$$
		=s\mbox{diag}\{\frp_{s-1}(X_{11},\cdots,
		X_{kk} 
		),\cdots,
		\frp_{s-1}(X_{i_1,i_1},\cdots,
		X_{i_{k-1},i_{k-1}},X_{pp}
		),\cdots,$$$$\frp_{s-1}(X_{j_1,j_1},\cdots,X_{j_{k-1},j_{k-1}},X_{n,n}),
		\cdots,\frp_{s-1}(X_{n-k-1,n-k-1},\cdots,
		X_{n-1,n-1},X_{nn})
		\}$$$$-s\frac{k}{n}.\frp_{s-1}(X_{11},\cdots,
		X_{nn} 
		)I_{d_{\omega_k}}
		\mbox{mod} (R_{\omega_k}^{s-2}(\frh)).
		$$Note that the notations $M_{\omega_{k}}(\mathfrak{C}_s)$ and $M_{\mathfrak{C}_{s}}(\omega_{k})$ are different, which are given by Equations (3.10) and (3.13) respectively. It follows from the above expression of $\psi_{\omega_{k}}(M_{\omega_{k}}(\mathfrak{C}_s))$ and Equation (4.2) that 	
		$\{\psi_{\omega_{k}}(M_{\omega_{k}}(\mathfrak{C}_s) \ | \ s=2, \cdots, k+1\}$	
		is  a minimal  
		set of generators for  $R_{\omega_{k}}(\mathfrak{h})$  over $J$.
		
		Since $V_{\omega_{k}}$ is minuscule, the restriction map $\psi_{\omega_{k}}$ is an associative algebra homomorphism and also a  $J$-module isomorphism. Hence,	$\{M_{\omega_{k}}(\mathfrak{C}_s) \ | \ s=2, \cdots, k+1\}$	
		is   a minimal  
		set of generators for  $R_{\omega_{k}}(\mathfrak{g})$  over $Z(\mbox{sl}_{n}(\mathbb{C}))$.
		More precisely, the set $$\{M_{\omega_{k}}(\mathfrak{C}_{2})^{s_1}\dots M_{\omega_{k}}(\mathfrak{C}_{k+1})^{s_k} \ | \  (s_1 \geq s_2 \cdots \geq s_k)\in 
		\sum_{k\times (n-k)}	
		\}$$
		is a set of the free basis for $Z(\mbox{sl}_{n}(\mathbb{C}))$-module $R_{\omega_{k}}(\mbox{sl}_{n}(\mathbb{C}))$.	
	\end{proof}

	Assume the finite-dimensional complex vector space $V$ is the direct sum of a family of subspaces of $V_i$, written by $$V=\bigoplus_{i=1}^s V_i.$$ 
	For $i=1,\cdots, s$, the $i$-th projection operator is defined by 
	$$P_i:V \rightarrow V_i, \ \sum_{j = 1}^{s}v_j \mapsto v_i, \ \forall \ v_j \in V_j.$$	
	\begin{defi}
		If $D_i:V  \rightarrow  V$ is a series of diagonalizable linear operators with special resolutions, i. e.
		$$D_i=\sum_{j = 1}^{s}a_{ji}P_j, i=1,\cdots, s.$$
		For any pair $V_i, V_j$ ($i\neq j$), if there exists some $D_k$ so that $D_k|_{V_i}
		\neq D_k|_{V_j}
		$, i.e. $a_{ik}\neq a_{jk}$ for some $k$, then we say $V$ is separated by $D_1,\cdots, D_s$.

	\end{defi}

	\begin{lemma}
		Suppose  $P$ is the complex  vector space spanned by the  projection operators $P_1, \cdots, P_s$. If $D_1, \cdots, D_s$ is a basis of the vector space $P$, then  $V$ is separated by $D_1,\cdots, D_s$.
	\end{lemma}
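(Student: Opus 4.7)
The plan is to recast the separation condition as a nonsingularity statement about the change-of-basis matrix from $\{D_k\}$ to $\{P_j\}$, after which the lemma follows immediately from the hypothesis that $\{D_1,\dots,D_s\}$ is a basis of $P$.

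First I would note that $P_1,\dots,P_s$ are linearly independent in $\End(V)$ since they are the projectors associated with the direct sum decomposition $V=\bigoplus_{j=1}^s V_j$; in particular $\dim P = s$. Assemble the coefficients in the given resolutions $D_k = \sum_{j=1}^s a_{jk} P_j$ into a matrix $A=(a_{jk})_{1\le j,k\le s}$ whose $k$-th column is the coordinate vector of $D_k$ in the basis $\{P_1,\dots,P_s\}$. Because $\{D_1,\dots,D_s\}$ is by hypothesis another basis of the $s$-dimensional space $P$, the matrix $A$ is invertible.

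Next I would unwind the definition: $V$ is separated by $D_1,\dots,D_s$ precisely when, for every pair $i\ne j$, there exists some $k$ with $a_{ik}\ne a_{jk}$; that is, when no two rows of $A$ coincide. Suppose for contradiction that rows $i$ and $j$ of $A$ are equal for some $i\ne j$. Then $\det A=0$, contradicting invertibility of $A$. Dually, one can argue via the linear functional $\phi\colon P\to\mathbb{C}$ defined by $\phi\!\left(\sum_l c_l P_l\right)=c_i-c_j$: if $a_{ik}=a_{jk}$ for every $k$, then $\phi$ vanishes on the basis $\{D_k\}$ and hence on all of $P$, yet $\phi(P_i)=1$, a contradiction.

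There is no serious obstacle here; the lemma is a transparent consequence of the fact that a change-of-basis matrix between two bases of the same finite-dimensional space must be invertible. The only point demanding care is bookkeeping with the indices in the convention $D_i=\sum_j a_{ji}P_j$, so that the separation condition is correctly read as distinctness of the rows (not the columns) of $A$.
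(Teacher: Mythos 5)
Your argument is correct and is essentially the paper's own proof: both form the change-of-basis matrix $A$ from $\{D_k\}$ to the linearly independent projectors $\{P_j\}$, deduce invertibility of $A$ from the basis hypothesis, and read the separation condition as distinctness of the rows of $A$. The extra dual argument via the functional $c\mapsto c_i-c_j$ is a harmless alternative phrasing of the same fact.
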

	\begin{proof}Write $$[D_1, \cdots, D_s]=[P_1, \cdots, P_s]A, \  \ A=(a_{ij})_{i,j=1}^{s}.$$Since  the  projection operators $P_1, \cdots, P_s$ are linear independent,  the matrix $A$ is invertible. Any two distinct rows of $A$ are linearly independent. Thus there exists the
	$ k$-th column such that $a_{ik}\neq a_{jk}$ for each pair $i\neq j$.
	\end{proof}

	Let
	\begin{equation}
		\lambda_{i_{1} i_{2} \cdots i_{k}}= (-\omega_{i_{1}-1} + \omega_{i_{1}}) + \dots + (-\omega_{i_{k}-1} + \omega_{i_{k}}),\  1 \leq i_{1} < \dots < i_{k} \leq n
	\end{equation}denote the weight of $e_{i_1}\wedge \cdots \wedge e_{i_k}$ for $\mbox{sl}_{n}(\mathbb{C})$-module $V_{\omega_k}$ and $\nu \in 
	\mathcal{P}_{+}$. 
	The tensor module decomposition is
	\begin{equation}
		V_{\omega_{k}} \otimes V_{\nu} = \sum_{\substack{1 \leq i_{1} < i_{2} < \dots < i_{k} \leq n  \\ \nu + \lambda_{i_{1} i_{2} \dots i_{k}} \in \mathcal{P}_{+}}} V_{\nu +  \lambda_{i_{1} i_{2} \dots i_{k}}}.
	\end{equation}
	Recall 
	that $M_{\omega_{k}, \nu}(\mathfrak{C}_{p}), p=2, \cdots,n,$ is a series  of diagonalizable linear operators on
	the tensor $\mbox{sl}_{n}(\mathbb{C})$-module
	$V_{\omega_{k}} \otimes V_{\nu} $, which acts on each constituent  $V_{\nu +  \lambda_{i_{1} i_{2} \dots i_{k}}}$
	with the eigenvalue
	\begin{equation}
		f_{\mathfrak{C}_{p}, i_{1}i_{2} \dots i_{k}}(\nu)=\chi_{\nu+\lambda_{i_{1} i_{2} \dots i_{k}}}(\mathfrak{C}_p)-\chi_{\nu}(\mathfrak{C}_p)-\chi_{\omega_k}(\mathfrak{C}_p).
	\end{equation}

	\begin{prop}
	Assume $\omega_{k}$ is totally subordinate to $\nu \in \mathcal{P}_{+}$.	
	Define	$$t_0(\nu)=\mbox{min}\{t \in \mathbb{N} \ | \ \forall \  \lambda_{i_{1} i_{2} \dots i_{k}}
	\neq  \lambda_{j_{1} j_{2} \dots j_{k}}, \exists  \ t \in \{2, \cdots, n\},   \  \mbox{such that}$$$$	M_{\omega_{k}, \nu}(\mathfrak{C}_{s})
	|_{V_{\nu + \lambda_{i_{1} i_{2} \dots i_{k}}}} \neq  	M_{\omega_{k}, \nu}(\mathfrak{C}_{s}) |_{V_{\nu + \lambda_{j_{1} j_{2} \dots j_{k}}}} \ \mbox{for} \ s=2,3, \cdots, t\}.
	$$Then the integer $t_0(\nu) = k+1$. 
\end{prop}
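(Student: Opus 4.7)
The plan is to reformulate the separation condition as an algebra-generation condition in $R_{\omega_k,\nu}(\mbox{sl}_n(\mathbb{C}))$ and then establish the two inequalities $t_0(\nu)\leq k+1$ and $t_0(\nu)\geq k+1$ using Theorem 5.3 and Corollary 5.2. The total subordination hypothesis makes $V_{\omega_k}\otimes V_\nu$ a direct sum of $\binom{n}{k}$ pairwise non-isomorphic irreducibles, so $R_{\omega_k,\nu}(\mbox{sl}_n(\mathbb{C}))\cong\mathbb{C}^{\binom{n}{k}}$ is a commutative semisimple algebra with $\mathbb{C}$-basis the orthogonal projections $P_{i_1\cdots i_k}$; a subalgebra equals the whole algebra if and only if its generators separate every pair of summands. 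Hence \emph{$t_0(\nu)\leq t$ is equivalent to $M_{\omega_k,\nu}(\mathfrak{C}_2),\ldots,M_{\omega_k,\nu}(\mathfrak{C}_t)$ generating $R_{\omega_k,\nu}(\mbox{sl}_n(\mathbb{C}))$ as a $\mathbb{C}$-algebra}: if they generate, then each $P_{i_1\cdots i_k}$ is a polynomial in them and two distinct summands must be distinguished by some individual $M_{\omega_k,\nu}(\mathfrak{C}_s)$; conversely, if they separate all pairs, the algebra they generate has $\binom{n}{k}$ simultaneous joint eigenvalues and hence full dimension.

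The upper bound $t_0(\nu)\leq k+1$ is then immediate from Theorem 5.3 combined with Proposition 3.9: the products $\prod_s M_{\omega_k}(\mathfrak{C}_{s+1})^{\mu_s}$ indexed by $\mu\in\sum_{k\times(n-k)}$ form a free $Z(\mbox{sl}_n(\mathbb{C}))$-basis of $R_{\omega_k}(\mbox{sl}_n(\mathbb{C}))$, so their images under $\gamma_\nu$ form a $\mathbb{C}$-basis of $R_{\omega_k,\nu}(\mbox{sl}_n(\mathbb{C}))$, making $M_{\omega_k,\nu}(\mathfrak{C}_2),\ldots,M_{\omega_k,\nu}(\mathfrak{C}_{k+1})$ a $\mathbb{C}$-algebra generating set.

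For the lower bound $t_0(\nu)\geq k+1$, I would pass to the associated graded. By Corollary 5.2, $R_{\omega_k,\nu}(\mbox{sl}_n(\mathbb{C}))\cong L(k,n-k)=\mathbb{C}[w_1,\ldots,w_k]/J(k,n-k)$; from the leading-term calculation in the proof of Theorem 5.3, the symbol of $M_{\omega_k,\nu}(\mathfrak{C}_s)$ in filtration degree $s-1$ is a nonzero scalar multiple of the power sum $\frp_{s-1}(x_1,\ldots,x_k)$. Hence the $\mathbb{C}$-subalgebra $B'$ generated by $M_{\omega_k,\nu}(\mathfrak{C}_2),\ldots,M_{\omega_k,\nu}(\mathfrak{C}_k)$ has associated graded equal to $\mathbb{C}[\frp_1,\ldots,\frp_{k-1}]=\mathbb{C}[w_1,\ldots,w_{k-1}]\subset L(k,n-k)$, where the last equality is Newton's identities. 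The Remark following Theorem 1.2 allows the assumption $n\geq 2k$, so the relations in $J(k,n-k)$ begin in degree $n-k+1\geq k+1$; in particular, in degree $\leq k$ the algebra $L(k,n-k)$ is freely generated by $w_1,\ldots,w_k$, and the generator $w_k$ lies outside $\mathbb{C}[w_1,\ldots,w_{k-1}]$. Thus $\mathrm{gr}(B')\subsetneq L(k,n-k)$, which forces $B'\subsetneq R_{\omega_k,\nu}(\mbox{sl}_n(\mathbb{C}))$, and the reformulation above yields $t_0(\nu)>k$.

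The main obstacle is the second half: the minimality assertion in Theorem 5.3 is formulated over $Z(\mbox{sl}_n(\mathbb{C}))$ and cannot be imported directly to $R_{\omega_k,\nu}(\mbox{sl}_n(\mathbb{C}))$, because the nontrivial lower-order corrections in each $M_{\omega_k,\nu}(\mathfrak{C}_s)$ could a priori allow the $\mathbb{C}$-subalgebra to expand after applying $\gamma_\nu$. Passing to the associated graded $L(k,n-k)$, where only the leading symbols survive, and exploiting the explicit degree bound $\deg J(k,n-k)\geq n-k+1\geq k+1$ is what makes the lower bound rigorous.
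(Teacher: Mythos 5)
Your proposal is correct and follows essentially the same route as the paper: both reformulate the separation condition as generation of $R_{\omega_k,\nu}(\mbox{sl}_{n}(\mathbb{C}))\cong L(k,n-k)$ by the operators $M_{\omega_k,\nu}(\mathfrak{C}_s)$, obtain the upper bound $t_0(\nu)\leq k+1$ from Theorem 5.3 together with Proposition 3.9, and obtain the lower bound from the fact that the defining relations of $L(k,n-k)$ begin only in degree $n-k+1\geq k+1$, so that $w_k$ cannot lie in the subalgebra determined by $w_1,\dots,w_{k-1}$. Your associated-graded treatment of the lower-order corrections is in fact more careful than the paper's terse ``a contradiction arises''; the only point to tighten is that $\mathrm{gr}(B')$ a priori merely \emph{contains} the algebra generated by the symbols $\frp_1,\dots,\frp_{k-1}$, so one should note explicitly that no cancellation of leading terms can occur in weighted degree at most $n-k$ (which is exactly what your degree bound on $J(k,n-k)$ supplies), whence the degree-$k$ component of $\mathrm{gr}(B')$ misses $w_k$.
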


	\begin{proof}Since $\omega_{k}$ is totally subordinate to $\nu \in \mathcal{P}_{+}$, it follows from  Theorem 5.4 that the finite set of  matrices $$I_{\omega_{k}, \nu}=\{
		M_{\omega_{k}, \nu}(\mathfrak{C}_{2})^{s_1}\cdots
		M_{\omega_{k}, \nu}(\mathfrak{C}_{k+1})^{s_k} \ | \  (s_1 \geq s_2 \cdots \geq s_k)\in 
		\sum_{k\times (n-k)}	\}$$
		is a basis of the $\mathbb{C}$-vector space $R_{\omega_{k}, \nu}(\mbox{sl}_{n}(\mathbb{C}))$.
		
		Note that  the set of projections of $\mathfrak{g}$-modules $$
		\{P_{i_{1} i_{2} \dots i_{k}}:
		V_{\omega_{k}} \otimes V_{\nu}\rightarrow V_{\nu + \lambda_{i_{1} i_{2} \dots i_{k}}} \ | \
		1 \leq i_1 < i_2 < \dots < i_k \leq n, \  \nu + \lambda_{i_{1} i_{2} \dots i_{k}} \in 	\mathcal{P}_{+}
		\}$$
		is also a linear basis of $R_{\omega_{k}, \nu}(\mbox{sl}_{n}(\mathbb{C}))$, which is isomorphic to the associative algebra $L(k,n-k)$, whose generator $w_i$ corresponds to $	M_{\omega_{k}, \nu}(\mathfrak{C}_{i+1})$, $i=1,\cdots,k$.
		By Lemma 5.6,  any distinct pairs $V_{\nu + \lambda_{i_{1} i_{2} \dots i_{k}}}$ and
		$V_{\nu + \lambda_{j_{1} j_{2} \dots j_{k}}}$ can be separated by certain operator
		$$	M_{\omega_{k}, \nu}(\mathfrak{C}_{2})^{s_1}\cdots
		M_{\omega_{k}, \nu}(\mathfrak{C}_{k+1})^{s_k}, \ (s_1 \geq s_2 \cdots \geq s_k)\in 
		\sum_{k\times (n-k)}
		$$in the basis set $I_{\omega_{k}, \nu}$. Thus 
		there exists an  operator
		$
		M_{\omega_{k}, \nu}(\mathfrak{C}_{s})$  which has different eigenvalues acting on 
		$V_{\nu + \lambda_{i_{1} i_{2} \dots i_{k}}}$ and
		$V_{\nu + \lambda_{j_{1} j_{2} \dots j_{k}}}$ for $s \in \{2, \cdots, k+1\}$.  Hence, $t_0(\nu) \leq k+1$.

		Since $R_{\omega_{k}, \nu}(\mbox{sl}_{n}(\mathbb{C}))\simeq L(k,n-k)$, the algebraic structure $L(k,n-k)$ in  Theorem 5.1 implies that such elements $w_{1}^{s_1}\cdots w_{k}^{s_{k}}$ in $L(k,n-k)$, with $s_{1}+2s_{2}+\cdots +ks_{k} \leq n-k$, are linearly independent.
		A contradiction arises. 
		Hence we claim that the integer $t_0(\nu) = k+1$.

	\end{proof}

	\section{$R_{\omega_k,\nu}(\mbox{sl}_{n}(\mathbb{C}))$ and the PTE problem}

	In this Section, we will 
	introduce the relationship between the  basis structure of  $R_{\omega_{k},\nu}(\mbox{sl}_{n}(\mathbb{C}))$  and the PTE problem. And we will finally  prove Wright's conjecture holds true.
	We always assume $1 \leq k \leq n-k$.
	
	\begin{prop} Assume $1 \leq i_{1} < i_{2} < \dots < i_{k} \leq n$, $1 \leq j_{1} < j_{2} < \dots < j_{k} \leq n$ and $i_{1} \leq j_{1}$.  Let $r$ denote the number of elements in the set $\{i_1, \cdots, i_k\}\cap \{j_1,\cdots, j_k\}$. 
		For  $\nu \in \mathcal{P}_{+}$ satisfying $\nu +  \lambda_{i_{1} i_{2} \dots i_{k}}, \nu +  \lambda_{j_{1} j_{2} \dots j_{k}} \in \mathcal{P}_{+}$, if $f_{\mathfrak{C}_{p}, i_{1} i_{2} \dots i_{k}}(\nu) = f_{\mathfrak{C}_{p}, j_{1} j_{2} \dots j_{k}}(\nu)$ for $2 \leq p \leq s+1$, then the PTE problem  with size $k-r$ and degree $s$ has a non-trivial solution.
	\end{prop}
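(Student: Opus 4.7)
The plan is to translate the hypothesis into an equality of Popov power sums, cancel the common contributions, and recognize the residue as a PTE identity of the advertised size and degree. Write $I = \{i_1, \ldots, i_k\}$ and $J = \{j_1, \ldots, j_k\}$.

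First I would observe that by (5.6) the assumption $f_{\mathfrak{C}_p, i_1 \cdots i_k}(\nu) = f_{\mathfrak{C}_p, j_1 \cdots j_k}(\nu)$ is exactly $\chi_{\nu + \lambda_I}(\mathfrak{C}_p) = \chi_{\nu + \lambda_J}(\mathfrak{C}_p)$, the $\chi_\nu$ and $\chi_{\omega_k}$ contributions cancelling. Popov's expansion (2.6) writes $\chi_\lambda(\mathfrak{C}_p) = S_p(\lambda) + P_p(S_1(\lambda), \ldots, S_{p-1}(\lambda))$ for a universal polynomial $P_p$. Because $\sum_i X_{ii} = 0$ in $\mbox{sl}_{n}(\mathbb{C})$, every integral weight satisfies $S_1(\lambda) = 0$; inducting on $p$ from $2$ up to $s+1$ then yields
$$
S_p(\nu + \lambda_I) = S_p(\nu + \lambda_J), \qquad 1 \le p \le s+1.
$$

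Second, setting $\mu_i = \nu(X_{ii})$ and $c_i = \mu_i + n - i - k/n$, the highest weight vector of $V_{\nu + \lambda_I}$ has $X_{ii}$-eigenvalue $\mu_i + \mathbf{1}_{\{i \in I\}} - k/n$, so $(m_i + n - i)|_{\nu + \lambda_I} = c_i + \mathbf{1}_{\{i \in I\}}$. Substituting into (2.4) and cancelling the common summand $\sum_i (n-i)^p$ turns the previous display into $\sum_{i=1}^n (c_i + \mathbf{1}_{\{i \in I\}})^p = \sum_{i=1}^n (c_i + \mathbf{1}_{\{i \in J\}})^p$. Entries indexed by $I \cap J$ and by $\{1, \ldots, n\} \setminus (I \cup J)$ cancel on both sides, leaving
$$
\sum_{a \in I \setminus J}\bigl[(c_a+1)^p - c_a^p\bigr] \;=\; \sum_{b \in J \setminus I}\bigl[(c_b+1)^p - c_b^p\bigr], \qquad 1 \le p \le s+1.
$$
Expanding $(x+1)^p - x^p = \sum_{j=0}^{p-1} \binom{p}{j} x^j$ and inducting triangularly on $p$ (the $p=1$ instance is free since $|I \setminus J| = |J \setminus I| = k - r$) extracts
$$
\sum_{a \in I \setminus J} c_a^j \;=\; \sum_{b \in J \setminus I} c_b^j, \qquad j = 1, \ldots, s,
$$
which are exactly the PTE identities of degree $s$ for the two size-$(k-r)$ lists $\{c_a : a \in I \setminus J\}$ and $\{c_b : b \in J \setminus I\}$. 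Dominant integrality of $\nu$ forces $\mu_i \in \tfrac{1}{n}\mathbb{Z}$ (from $\sum_i \mu_i = 0$ and $\mu_i - \mu_{i+1} \in \mathbb{Z}$), so $nc_i \in \mathbb{Z}$; rescaling the lists by $n$, which preserves equal power sums simultaneously at all orders, produces an honest integer solution.

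The main obstacle is non-triviality: one must ensure that $\{c_a\}_{a \in I \setminus J}$ and $\{c_b\}_{b \in J \setminus I}$ are not permutations of each other. Since $\lambda_I \ne \lambda_J$ forces $I \ne J$, the sets $I \setminus J$ and $J \setminus I$ are disjoint of common cardinality $k - r > 0$, and distinctness of the two multisets reduces to the $c_i$ being pairwise distinct on $(I \setminus J) \cup (J \setminus I)$. This is a generic condition on $\nu$, and it will be guaranteed by the explicit construction of $\nu$ used in Section 6 to produce Wright's ideal solutions.
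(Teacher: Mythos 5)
Your reduction to power sums is the same route the paper takes: use Popov's triangular expansion to convert the hypothesis $\chi_{\nu+\lambda_{i_1\cdots i_k}}(\mathfrak{C}_p)=\chi_{\nu+\lambda_{j_1\cdots j_k}}(\mathfrak{C}_p)$ into $S_p(\nu+\lambda_{i_1\cdots i_k})=S_p(\nu+\lambda_{j_1\cdots j_k})$ for $2\le p\le s+1$, cancel the indices in $I\cap J$ and in the complement of $I\cup J$, expand $(x+1)^p-x^p$ binomially, and peel off the identities $\sum_{a\in I\setminus J}c_a^{\,q}=\sum_{b\in J\setminus I}c_b^{\,q}$ for $q=1,\dots,s$ by induction on $p$. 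The only cosmetic difference is integrality: you keep the fractional shift coming from $-k/n$ and then dilate by $n$, whereas the paper uses the translated quantities $x_t=f_{i_{\alpha_t}}(\nu)-i_{\alpha_t}$ and $y_t=f_{j_{\beta_t}}(\nu)-j_{\beta_t}$, which are already integers; both fixes are legitimate since equal power sums up to order $s$ are preserved by a common translation and by a common dilation.

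There is, however, one genuine gap: you leave non-triviality as ``a generic condition on $\nu$'' to be ``guaranteed by the explicit construction in Section 6.'' The Proposition asserts the conclusion for \emph{every} $\nu$ with $\nu+\lambda_{i_1\cdots i_k},\ \nu+\lambda_{j_1\cdots j_k}\in\mathcal{P}_{+}$, so deferring this to a later special choice of $\nu$ does not prove the statement; moreover the claim as phrased (``pairwise distinct $c_i$ is generic'') concedes that your argument might fail for some admissible $\nu$. In fact no genericity is needed: dominance of $\nu$ gives $f_1(\nu)\ge f_2(\nu)\ge\cdots\ge f_n(\nu)$, hence $t\mapsto f_t(\nu)-t$ (equivalently $t\mapsto c_t$) is \emph{strictly} decreasing on $\{1,\dots,n\}$ and in particular injective. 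Since $I\setminus J$ and $J\setminus I$ are disjoint and nonempty (assuming $\lambda_{i_1\cdots i_k}\ne\lambda_{j_1\cdots j_k}$, i.e.\ $k-r>0$), the two value sets $\{x_t\}$ and $\{y_t\}$ are disjoint, hence certainly not permutations of one another, and each list is strictly decreasing. Inserting this one-line observation closes the gap and makes your argument coincide with the paper's proof.
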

	\begin{proof}
		Let the weight $\nu = a_{1}\omega_{1} + a_{2}\omega_{2} + \dots + a_{n-1}\omega_{n-1} $ such that $\nu + \lambda_{i_{1} i_{2} \dots i_{k}}, \nu + \lambda_{j_{1} j_{2} \dots j_{k}} \in \mathcal{P}_{+}$. Denote $$[f(\nu)] = [f_{1}(\nu), \dots, f_{n-1}(\nu), f_{n}(\nu)]$$ with $ f_{1}(\nu) \geq \dots \geq f_{n-1}(\nu) \geq f_{n}(\nu) =0$, which is the corresponding Young pattern with $f_i(\nu) =\sum\limits_{j=i}^{n-1}a_j, i= 1, 2, ..., n-1$. 
		Recall that $\lambda_{i_{1} i_{2} \cdots i_{k}}$ is given by Equation (5.7).
		Then we get each component of Young pattern $[f(\nu + \lambda_{i_{1} i_{2} \cdots i_{k}})]$ as follows: 
		\begin{equation*} 
			f_{t}(\nu + \lambda_{i_{1} i_{2} \cdots i_{k}}) = 
			\begin{cases}
				0 & \text{ if } t = n , \\
				f_{t}(\nu) + 1 & \text{ if } t \in \{ i_{1}, i_{2}, \dots, i_{k}\}, \\
				f_{t}(\nu) & \text{otherwise}.
			\end{cases}
		\end{equation*}
		Define
		$$a_{0} = \dfrac{\sum\limits_{j=1}^{n}f_{j}(\nu + \lambda_{i_{1} i_{2} \cdots i_{k}}) }{n},\ 	c_{t} = a_{0} - n+t, t = 0, 1, 2, \dots, n.
		$$
		Let $\lambda_{j_{1} j_{2} \cdots j_{k}}$ be another weight of $V_{\omega_{k}}$ and 
		$$I=\{i_{1}, \dots, i_{k}\}, \ J=\{j_{1}, \cdots, j_{k}\}, \ K = \{1,\cdots,n\}-I-J,$$$$	
		\{i_{\alpha_{1}}, \dots, i_{\alpha_{k-r}} \} = \{i_{1}, \dots, i_{k}\} \backslash \{j_{1}, \dots, j_{k}\}, \quad i_{\alpha_{1}} < \dots <  i_{\alpha_{k-r}},$$
		$$\{j_{\beta_{1}}, \dots, j_{\beta_{k-r}}\} = \{j_{1}, \dots, j_{k}\} \backslash \{i_{1}, \dots, i_{k}\}, \quad j_{\beta_{1}} < \dots < j_{\beta_{k-r}}.$$
		If $M_{\omega_{k}, \nu}(\mathfrak{C}_{2})$ acts on  $V_{\nu + \lambda_{i_{1} i_{2} \dots i_{k}}}$ and $V_{\nu + \lambda_{j_{1} j_{2} \dots j_{k}}}$ with  the same eigenvalues, then by Equation (5.10) we have  $$ 0=f_{\mathfrak{C}_{2}, i_{1} i_{2} \dots i_{k}}(\nu) - f_{\mathfrak{C}_{2}, j_{1} j_{2} \dots j_{k}}(\nu)$$
		$$=\chi_{\nu + \lambda_{i_{1} i_{2} \dots i_{k}}}(\mathfrak{C}_2)-
		\chi_{\nu + \lambda_{j_{1} j_{2} \dots j_{k}}}(\mathfrak{C}_2)$$$$
		= \sum_{s = 1}^{n} (f_{s}(\nu + \lambda_{i_{1} i_{2} \cdots i_{k}}) - c_{s})^{2} - \sum_{s = 1}^{n} (f_{s}(\nu + \lambda_{j_{1} j_{2} \cdots j_{k}}) - c_{s})^{2}$$$$= (\sum_{t = 1}^{k}(f_{i_{t}}(\nu) + 1 - c_{i_{t}})^2 + \sum_{t = 1}^{k}(f_{j_{t}}(\nu) - c_{j_{t}})^2 - \sum_{t \in \{i_{1}, i_{2}, \dots, i_{k}\} \cap \{j_{1}, j_{2}, \dots, j_{k}\}}(f_{t}(\nu) - c_{t})^2) $$$$ \quad - (\sum_{t = 1}^{k} (f_{j_{t}}(\nu) + 1 - c_{j_{t}})^2 + \sum_{t = 1}^{k}(f_{i_{t}}(\nu) - c_{i_{t}})^2 - \sum_{t \in \{i_{1}, i_{2}, \dots, i_{k}\} \cap \{j_{1}, j_{2}, \dots, j_{k}\}}(f_{t}(\nu) - c_{t})^2)$$$$= (\sum_{t = 1}^{k} (f_{i_{t}}(\nu) + 1 - c_{i_{t}})^2  - \sum_{t = 1}^{k} (f_{i_{t}}(\nu) - c_{i_{t}})^2) - (\sum_{t = 1}^{k} (f_{j_{t}}(\nu) + 1 - c_{j_{t}})^2  - \sum_{t = 1}^{k} (f_{j_{t}}(\nu) - c_{j_{t}})^2)$$
		$$=2\sum_{t = 1}^{k} [(f_{i_{t}}(\nu) - c_{i_{t}}) - (f_{j_{t}}(\nu) - c_{j_{t}})]
		=2\sum_{t = 1}^{k-r}[(f_{i_{\alpha_{t}}}(\nu) - c_{i_{\alpha_{t}}}) - (f_{j_{\beta_{t}}}(\nu) - c_{j_{\beta_{t}}})].$$
		Inductively, since $M_{\omega_{k}}(\mathfrak{C}_{p})$ acting on $V_{\nu +  \lambda_{i_{1} i_{2} \dots i_{k}}}$ and $V_{\nu +  \lambda_{j_{1} j_{2} \dots j_{k}}}$ has the same eigenvalue for $3 \leq p \leq s+1$, then
		$$0=f_{\mathfrak{C}_{p}, i_{1} i_{2} \dots i_{k}}(\nu) - f_{\mathfrak{C}_{p}, j_{1} j_{2} \dots j_{k}}(\nu)=\chi_{\nu + \lambda_{i_{1} i_{2} \dots i_{k}}}(\mathfrak{C}_p)-
		\chi_{\nu + \lambda_{j_{1} j_{2} \dots j_{k}}}(\mathfrak{C}_p)
		$$$$= S_{p}(\nu + \lambda_{i_{1} i_{2} \dots i_{k}}) - S_{p}(\nu + \lambda_{j_{1} j_{2} \dots j_{k}})= \sum_{q = 1}^{p-1} \binom{p}{q} (\sum_{t = 1}^{k} (f_{i_{t}}(\nu) - c_{i_{t}})^{q} - (f_{j_{t}}(\nu) - c_{j_{t}})^{q})$$
		$$= \sum_{q = 1}^{p-1} \binom{p}{q} (\sum_{t = 1}^{k-r} (f_{i_{\alpha_{t}}}(\nu) - c_{i_{\alpha_{t}}})^{q} - (f_{j_{\beta_{t}}}(\nu)- c_{j_{\beta_{t}}})^{q}) $$
		by Equations (2.4), (2.5) and (2.6).
		Let 
		$$x_{t} = f_{i_{\alpha_{t}}}(\nu) - i_{\alpha_{t}}, \quad y_{t} = f_{j_{\beta_{t}}}(\nu) - j_{\beta_{t}} $$
		for $t = 1, 2, \dots, k-r$. Then we have 
		$$x_{t}, y_{t} \in \mathbb{Z}, \quad \{ x_{1}, \dots, x_{k-r} \} \ne \{ y_{1}, \dots, y_{k-r} \},$$
		$$x_{1} > x_{2} > \dots > x_{k-r},\quad y_{1} > y_{2} > \dots > y_{k-r},$$
		and they satisfy PTE problem
	    \begin{equation}\label{TEP2}
	    	\sum_{i = 1}^{k-r}{x_{i}^j} = \sum_{i = 1}^{k-r}y_{i}^{j}, \qquad j = 1, 2, \dots, s.
	    \end{equation}
		It means that $[x_{1}, x_{2}, \dots, x_{k-r}]$ and $[y_{1}, y_{2}, \dots, y_{k-r}]$ is a non-trivial solution of PTE problem \eqref{TEP2}. 
	\end{proof}

	\begin{cor} Assume $1 \leq k \leq n-k$.
		There exists some $\nu \in \mathcal{P}_{+}$ such that the tensor module $V_{\omega_{k}} \otimes V_{\nu}$ can be separated by operators $M_{\omega_{k}, \nu}(\mathfrak{C}_{2}),$ $\dots, M_{\omega_{k}, \nu}(\mathfrak{C}_{k+1})$. 
	\end{cor}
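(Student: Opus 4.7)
My plan is to reduce Corollary 5.9 to an immediate application of Proposition 5.7 by exhibiting a concrete $\nu \in \mathcal{P}_{+}$ for which $\omega_{k}$ is totally subordinate to $\nu$, and then unpacking the conclusion $t_{0}(\nu) = k+1$ back into the separation language of Definition 5.5. No PTE input is needed at this stage -- this corollary is purely a structural existence statement that falls out of what has already been proved.

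For the existence of a suitable $\nu$, I would take the strictly dominant weight $\nu = \rho = \omega_{1} + \omega_{2} + \cdots + \omega_{n-1}$. To see that every weight of the form $\nu + \lambda_{i_{1} \cdots i_{k}}$ lies in $\mathcal{P}_{+}$, I would write $\lambda_{i_{1} \cdots i_{k}} = \sum_{t=1}^{k} (\omega_{i_{t}} - \omega_{i_{t}-1})$ (with the convention $\omega_{0} = \omega_{n} = 0$) and note that the coefficient of any $\omega_{j}$ in $\lambda_{i_{1} \cdots i_{k}}$ lies in $\{-1, 0, 1\}$, with the value $-1$ occurring only when $j+1 \in \{i_{1}, \ldots, i_{k}\}$ while $j \notin \{i_{1}, \ldots, i_{k}\}$. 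Adding $\rho$ therefore leaves every coefficient non-negative, so $\nu + \lambda_{i_{1} \cdots i_{k}} \in \mathcal{P}_{+}$ for every tuple $1 \leq i_{1} < \cdots < i_{k} \leq n$.

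Combining this with the tensor product decomposition (5.8), and using that the $\binom{n}{k}$ weights $\lambda_{i_{1} \cdots i_{k}}$ are pairwise distinct (so the shifted weights $\nu + \lambda_{i_{1} \cdots i_{k}}$ remain pairwise distinct), I conclude that $V_{\omega_{k}} \otimes V_{\nu}$ decomposes into exactly $\binom{n}{k} = \dim V_{\omega_{k}}$ irreducible constituents. By Definition 3.8, this is precisely the statement that $\omega_{k}$ is totally subordinate to $\nu$.

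With total subordination secured, Proposition 5.7 now applies and yields $t_{0}(\nu) = k+1$. Unwinding the definition of $t_{0}(\nu)$ in terms of the projectors $P_{i_{1} \cdots i_{k}}\colon V_{\omega_{k}} \otimes V_{\nu} \to V_{\nu + \lambda_{i_{1} \cdots i_{k}}}$, each $M_{\omega_{k}, \nu}(\mathfrak{C}_{s})$ expands as $\sum f_{\mathfrak{C}_{s}, i_{1} \cdots i_{k}}(\nu) \, P_{i_{1} \cdots i_{k}}$ in the sense of Definition 5.5, and the conclusion $t_{0}(\nu) = k+1$ says that for every pair of distinct constituents there is an $s \in \{2, \ldots, k+1\}$ whose scalar coefficients on the two constituents disagree. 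This is exactly the required separation, finishing the proof. There is no essential obstacle to overcome: the hard algebraic and combinatorial work sits in Proposition 5.7 and Theorem 5.4, and the only remaining task is the elementary verification that a strictly dominant weight such as $\rho$ witnesses total subordination of $\omega_{k}$.
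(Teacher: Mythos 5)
Your proof is correct, but it follows a genuinely different route from the paper's. The paper proves this corollary from the number-theoretic side: for two distinct constituents $V_{\nu+\lambda_{i_1\cdots i_k}}$ and $V_{\nu+\lambda_{j_1\cdots j_k}}$ with $r=|\{i_1,\dots,i_k\}\cap\{j_1,\dots,j_k\}|$, coincidence of the eigenvalues of $M_{\omega_k,\nu}(\mathfrak{C}_p)$ for all $2\le p\le k+1$ would, by Proposition 6.1, manufacture a non-trivial PTE solution of size $k-r\le k$ and degree $k$, which Theorem 2.9 (size at least degree plus one) forbids; hence every pair of constituents is separated, for essentially any $\nu$, with no appeal to total subordination or to the $t_0(\nu)=k+1$ proposition. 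You instead exhibit $\nu=\rho$, verify that each $\rho+\lambda_{i_1\cdots i_k}$ is dominant so that $\omega_k$ is totally subordinate to $\rho$, and then quote the proposition computing $t_0(\nu)$ (your ``Proposition 5.7'' is Proposition 5.10 in the paper's numbering, which rests only on Theorem 5.4, Lemma 5.6 and the $L(k,n-k)$ structure, so there is no circularity). Your version has a real merit the paper's lacks: it actually produces a totally subordinate $\nu$, a fact the paper asserts without proof even though Proposition 5.10 and Theorem 6.5 both depend on it. What it gives up is the point of placing the corollary in Section 6, namely an independent, PTE-based confirmation of the separation bound that moreover works for every $\nu$ rather than a single witness. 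Both arguments are valid; the ideal write-up would combine your explicit verification for $\nu=\rho$ with the paper's eigenvalue-coincidence-implies-PTE contradiction.
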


	\begin{proof}
		Since $k \leq n-k$, we know that  there exist two indices $I=\{i_{1}, i_{2}, \cdots, i_{k}\}$ and $J=\{j_{1}, j_{2}, \cdots, j_{k}\}$ such that $I\cap J=\emptyset$. For $\nu \in \mathcal{P}_{+}$, assume 
		$V_{\nu + \lambda_{i_{1} i_{2} \dots i_{k}}}$ and $V_{\nu + \lambda_{j_{1} j_{2} \dots j_{k}}}$ are two distinct irreducible components of $V_{\omega_{k}} \otimes V_{\nu}$. If  $ f_{\mathfrak{C}_{p}, i_{1} i_{2} \dots i_{k}}(\nu) = f_{\mathfrak{C}_{p}, j_{1} j_{2} \dots j_{k}}(\nu)$ for $2 \leq p \leq k+1$, then the  PTE problem  with size $k$ and degree $k$ has a non-trivial solution by Proposition 6.1. But it is impossible
		by Theorem 2.9.
	\end{proof}

	\begin{cor}
		Assume there exists some weight $\nu \in \mathcal{P}_{+}$ such that  $\nu +  \lambda_{i_{1} i_{2} \dots i_{k}}$ and $\nu +  \lambda_{j_{1} j_{2} \dots j_{k}}$ are both in $\mathcal{P}_{+}$. If $ f_{\mathfrak{C}_{p}, i_{1} i_{2} \dots i_{k}}(\nu) = f_{\mathfrak{C}_{p}, j_{1} j_{2} \dots j_{k}}(\nu)$ for $2 \leq p \leq k$, then $r = 0$. 
	\end{cor}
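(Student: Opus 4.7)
The plan is to chain Proposition~6.1 with the lower bound $n' \geq m+1$ for non-trivial PTE solutions, which was established in Section~2 as Theorem~2.9 (equivalently, the first assertion of Theorem~1.3). The hypothesis furnishes the eigenvalue equalities $f_{\mathfrak{C}_{p}, i_{1} i_{2} \cdots i_{k}}(\nu) = f_{\mathfrak{C}_{p}, j_{1} j_{2} \cdots j_{k}}(\nu)$ for $2 \leq p \leq k$, and this is exactly the range $2 \leq p \leq s+1$ that Proposition~6.1 requires if one takes $s = k-1$.

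Applying Proposition~6.1 with $s = k-1$ produces a non-trivial solution of the PTE problem of size $k-r$ and degree $k-1$. Theorem~2.9 forbids non-trivial PTE solutions whose degree $m$ and size $n'$ satisfy $m \geq n'$, so here we must have $k - 1 \leq (k - r) - 1$, which forces $r \leq 0$. Since $r = |\{i_{1}, \ldots, i_{k}\} \cap \{j_{1}, \ldots, j_{k}\}|$ is a non-negative integer, we conclude $r = 0$.

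The argument has essentially no technical obstacle; all the heavy lifting was already carried out in Proposition~6.1 (translating Casimir eigenvalue coincidences into power-sum identities and extracting a PTE instance) and in Theorem~2.9 (ruling out such identities when the degree is too large relative to the size, via infinitesimal central characters). The only point requiring care is index alignment: the hypothesis supplies precisely one equality short of what would make Proposition~6.1 vacuous, so the resulting PTE instance of degree $k-1$ and size $k-r$ is incompatible with Theorem~2.9 whenever $r \geq 1$, forcing $r = 0$.
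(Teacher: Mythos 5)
Your proposal is correct and follows the paper's own argument exactly: both apply Proposition~6.1 with $s=k-1$ to extract a non-trivial PTE solution of size $k-r$ and degree $k-1$, then invoke Theorem~2.9 to conclude $k-r \geq (k-1)+1$, hence $r=0$. No further comment is needed.
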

	
	\begin{proof}
		We also use the notations in the proof of Proposition 6.1. Then the  PTE problem with size $k - r$ and degree $k-1$ have a non-trivial solution. Therefore, $k-r \geq k-1+1$ by Theorem 2.9, i.e. $r = 0$.
	\end{proof}

	\begin{thm}
		An ideal solution of the PTE Problem with any degree $k$ always exists.
	\end{thm}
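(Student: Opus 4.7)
The plan is to apply Proposition 5.7 to the fundamental weight $\omega_{k+1}$ (rather than $\omega_k$) in order to force a coincidence of the Casimir eigenvalues $\mathfrak{C}_2,\dots,\mathfrak{C}_{k+1}$ on two distinct constituents of an appropriate tensor product, and then extract an ideal PTE solution of degree $k$ by combining Proposition 6.1 with the size-versus-degree bound coming from Theorem 2.9.

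Fix $k\geq 1$ and work inside $\mathfrak{sl}_n(\mathbb{C})$ with $n=2(k+1)$, so that $1\leq k+1\leq n-(k+1)$ and the hypotheses of Proposition 5.7 hold for $\omega_{k+1}$. Choose $\nu\in\mathcal{P}_+$ sufficiently regular (for instance $\nu=N\rho$ with $N$ large) so that $\nu+\lambda_{i_1\cdots i_{k+1}}\in\mathcal{P}_+$ for every subset $\{i_1<\cdots<i_{k+1}\}\subset\{1,\dots,n\}$; by the tensor-decomposition formula of Section 5 this guarantees that $V_{\omega_{k+1}}\otimes V_\nu$ splits into $\binom{n}{k+1}=\dim V_{\omega_{k+1}}$ distinct irreducible constituents, i.e., $\omega_{k+1}$ is totally subordinate to $\nu$.

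Proposition 5.7 then yields $t_0(\nu)=(k+1)+1=k+2$. By the definition of $t_0(\nu)$, the operators $M_{\omega_{k+1},\nu}(\mathfrak{C}_2),\dots,M_{\omega_{k+1},\nu}(\mathfrak{C}_{k+1})$ fail to separate every pair of distinct constituents, so there exist distinct $(k+1)$-subsets $I,J\subset\{1,\dots,n\}$ with
\[ f_{\mathfrak{C}_p,I}(\nu)=f_{\mathfrak{C}_p,J}(\nu),\qquad p=2,3,\dots,k+1. \]
Feeding this collision into Proposition 6.1 (with its ``$k$'' played by our $k+1$, and its ``$s$'' equal to $k$) produces a non-trivial PTE solution of size $(k+1)-r$ and degree $k$, where $r=|I\cap J|$. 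Now Theorem 2.9 (equivalently, Part (1) of Theorem 1.3) forces any non-trivial PTE solution of degree $k$ to have size at least $k+1$; hence $(k+1)-r\geq k+1$, which forces $r=0$. The resulting solution therefore has size exactly $k+1$ and degree $k$, i.e., it is an ideal solution of degree $k$.

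The crux of the argument is the strict lower bound $t_0(\nu)\geq k+2$ in Proposition 5.7, which is what actually supplies the unresolved pair $I\neq J$ at level $k+1$; this rests on the identification $R_{\omega_{k+1},\nu}(\mathfrak{sl}_n(\mathbb{C}))\simeq L(k+1,k+1)$ of Corollary 5.2 together with the fact, extracted from the presentation in Theorem 5.1, that $L(k+1,k+1)$ cannot be generated by fewer than $k+1$ algebra generators over $\mathbb{C}$. Once that collision is granted, the remainder of the argument is routine bookkeeping with Proposition 6.1 and the inequality $n\geq m+1$ from Theorem 2.9.
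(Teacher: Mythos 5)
Your argument is essentially the paper's own proof: both extract an unseparated pair of constituents from the strict lower bound on $t_0(\nu)$ in Proposition 5.7, feed the resulting collision of Casimir eigenvalues into Proposition 6.1, and invoke the size-versus-degree bound of Theorem 2.9 to force $r=0$ and hence an ideal solution. The only difference is a harmless reindexing: you run the argument for $\omega_{k+1}$ in $\mathfrak{sl}_{2(k+1)}(\mathbb{C})$ to land directly on degree $k$ and size $k+1$, whereas the paper runs it for $\omega_k$ and produces the ideal solution of degree $k-1$ and size $k$.
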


	\begin{proof}For a fixed positive integer $k$, there always  exists $n$ such that $1 \leq k \leq n-k$. And there  there always  exists weight 
		 $\nu \in \mathcal{P}_{+}$ such that $\omega_{k}$ is totally subordinate to  $\nu$. By Proposition 5.10,  $t_0(\nu)=k+1$ is the smallest integer $t$  such that $\mbox{sl}_{n}(\mathbb{C})$-module $V_{\omega_{k}} \otimes V_{\nu}$ can be separated  by $M_{\omega_{k}, \nu}(\mathfrak{C}_{p})$ for $2 \leq p \leq t$.
		Therefore, 		
		 the operators $M_{\omega_{k}, \nu}(\mathfrak{C}_{p})$ for $2 \leq p \leq k$ can not separate the tensor module  $V_{\omega_{k}} \otimes V_{\nu}$, i. e. there exists certain weights  $\nu +  \lambda_{i_{1} i_{2} \dots i_{k}}$ and $\nu +  \lambda_{j_{1} j_{2} \dots j_{k}}$ both in $\mathcal{P}_{+}$ such that 
		$ f_{\mathfrak{C}_{p}, i_{1} i_{2} \dots i_{k}}(\nu) = f_{\mathfrak{C}_{p}, j_{1} j_{2} \dots j_{k}}(\nu)$ for $2 \leq p \leq k$, then by Corollary 6.4, $r = 0$. Furthermore,  Proposition 6.1 implies the PTE problem with size k and degree $k-1$ has a non-trivial solution.		
		 Hence, Wright's Conjecture, i. e. the second part of Theorem 1.3, holds true.
		
	\end{proof}

\end{document}